\newtheorem{thm}{Theorem}
\newtheorem{Lemma}{Lemma}
\newtheorem{Corollary}{Corollary}
\newtheorem{Example}{Example}
\numberwithin{defn}{section}
\numberwithin{thm}{section}
\numberwithin{Lemma}{section}
\numberwithin{Corollary}{section}
\numberwithin{Example}{section}
\numberwithin{subsection}{section}
\numberwithin{Remark}{section}
\numberwithin{equation}{section}
\numberwithin{ppn}{section}
\begin{document}
\title
[Convergence Criteria of a Three Step Scheme...]
{Convergence Criteria of a Three Step Scheme under generalized Lipschitz Condition in Banach Spaces} 
\author
[Saxena et al.]
{Akanksha Saxena$^1$, J. P. Jaiswal$^{2}$, K. R. Pardasani$^1$}
\thanks{$^1$ Department of Mathematics, Maulana Azad National Institute of Technology, Bhopal, M.P., India-462003, Email:  AS(akanksha.sai121@gmail.com); KRP(kamalrajp@rediffmail.com)}
\thanks{$^{2}$ Department of Mathematics, Guru Ghasidas Vishwavidyalaya (A Central University), Bilaspur, C.G., India-495009, Email: asstprofjpmanit@gmail.com
}
\date{}
\maketitle

\textbf{Abstract.} 
The goal of this study is to investigate the local convergence of a three-step Newton-Traub technique for solving nonlinear equations in Banach spaces with a convergence rate of five. The first order derivative of a nonlinear operator is assumed to satisfy the generalized Lipschitz condition, i.e. the $\kappa$-average condition.
Furthermore, a few results on the convergence of the same method in Banach spaces are developed under the assumption that the derivative of the operators satisfies the radius or center Lipschitz condition with a weak $\kappa$-average, and that $\kappa$ is a positive integrable function but not necessarily non-decreasing.
Our new notion provides a tighter convergence analysis without the need for new circumstances. As a result, we broaden the applicability of iterative approaches. Theoretical results are supported further by illuminating examples.
The existence and uniqueness of the solution $x^*$ are examined in the convergence theorem.
In the end, we achieve weaker sufficient convergence criteria and more specific information on the position of the solution than previous efforts requiring the same computational effort. We obtain the convergence theorems as well as some novel results by applying the results to some specific functions for $\kappa(u)$. A numerical test is carried out to corroborate the hypothesis established in this work.
\\\\
\textbf{Mathematics Subject Classification (MSC 2020).} 
65H10, 65J15, 65G99, 47J25.
\\\\
\textbf{Keywords and Phrases.} 
Nonlinear problem, convergence radius, local convergence, Banach space, Lipschitz condition, $\kappa$-average.

\section{\bf Introduction}

Let a nonlinear operator $G:\Omega\subseteq X\to Y$ be such that $X$ and $Y$ are two Banach spaces, $\Omega$ is a non-empty open convex subset and $G$ is Fr\'echet differentiable nonlinear operator. 
A popular iterative method for solving the equation
\begin{eqnarray}\label{eqn:11}
G(x)=0,
\end{eqnarray}
is Newton's scheme, which may be represented as:
\begin{eqnarray}\label{eqn:13}
x_{t+1}&=&x_t-[G'(x_t)]^{-1}G(x_t), \ t\geq0,
\end{eqnarray}
is being preferred though its speed of convergence is low. Newton's method {\cite{Ortega}}, is a well known iterative approach that converges quadratically, which was initially studied by Kantorovich \cite{Kantorovich} and then re-evaluated by Rall \cite{Rall}. 

Some Newton-type methods with third-order, fourth-order convergence that do not require the computation of second order derivatives have been developed in the refs.[\cite{Homeier}, \cite{Li}, {\cite{Kang}}, \cite{Traub}, {\cite{Zhanlav}].  While methods of higher $R$-order convergence are often not conducted regularly despite having a high speed of convergence, this is due to the high operational expense. However, the method of higher $R$-order convergence can be applied in stiff system issues \cite{Kantorovich} where quick convergence is necessary.

 The convergence domain is critical for the steady behaviour of an iterative method from a numerical stand point. There are two types of iterative method convergence research: semilocal and local convergence analysis. The semilocal convergence study uses information around a starting point to provide criteria for ensuring the convergence of iterative methods, whereas the local one uses information around a solution to estimate the radii of the convergence balls. Numerous authors investigated the local convergence analysis for Newton-type, Jarratt-type, Weerakoon-type, and other types in Banach space in works [{\cite{Cho}}, \cite{Chen}, \cite{Kanwar}, \cite{Magrenan}, \cite{Sharma}].

Here, we analyze the local convergence of the classical fifth-order Newton-Traub method \cite {Argyros} under the $\kappa$-average condition which is written as:
\begin{eqnarray}\label{eqn:12}
y_t&=&x_t-[G'(x_t)]^{-1}G(x_t),\nonumber\\
z_{t}&=&y_t-[G'(x_t)]^{-1}G(y_t),\nonumber\\
x_{t+1}&=&z_t-[G'(y_t)]^{-1}G(z_t), \ t=0,1, 2,\cdots.
\end{eqnarray}
The important feature of the method $(\ref{eqn:12})$  is that: it is the easiest and most efficient fifth-order iterative method, requiring only two evaluations of the function $G_j$, per j$^{th}$ iteration, one of the first derivative $G_j'$ and no evaluations of the second derivative $G_j''$ making it mathematically effective. Several research on the weakness and/or expansion of hypotheses made on the underlying operators can be found in the literature.
Wang \cite{Wang} created generalized Lipschitz conditions to investigate the local convergence of Newton-Traub method, in which a non-decreasing positive integrable function was chosen rather than the normal Lipschitz constant. In the coming years, Wang and Li \cite{Wang2} has obtained some results on Newton-Traub method convergence in Banach spaces when the derivative of the operators satisfies the radius or center Lipschitz condition but with a weak $\kappa$-average. Shakhno {\cite{Shakhno}} has explored the local convergence of the two-step Secant-type approach \cite{Kantorovich}, where the first-order divided differences satisfy the generalized Lipschitz criteria. Recently, the local convergence of a two-step Newton type method of convergence rate three under generalized Lipschitz conditions has been studied by Saxena et. al \cite{Saxena} whose definitions will be used in this article.

%
As a motivational example let $X=Y=R^3, D=\overline{V}(0,1)$ and $X^*=(0,0,0)^T$. Define function $t$ on $D$ for $w=(x,y,z)^T$ by
 \begin{eqnarray*}
t(w)=(e^x-1, \frac{e-1}{2}y^2+y,z)^T.
\end{eqnarray*}
Then, the Fr\'echet derivative is 
\begin{equation}
t'(w)=
\begin{pmatrix}
e^x && 0 && 0\\
0 && (e-1)y+1 && 0\\
0 && 0 && 1
\end{pmatrix}.
\end{equation}
Hence, $\kappa=\frac{e}{2}, \kappa_0=\frac{e-1}{2}$ and $\kappa_0<\kappa$ (see definitions $(\ref{eqn:21})$ and $(\ref{eqn:22})$). So, replacing $\kappa$ by $\kappa_0$ at the denominator gives the benefits. If $\kappa$ and $\kappa_0$ are not constants then we can take $\kappa(u)=\frac{eu}{2}, \kappa_0(u)=\frac{(e-1)u}{2}\ and\ \overline{\kappa}(u)=\frac{e^{\frac{1}{(e-1)}}u}{2} $ (see definitions $(\ref{eqn:23})$, $(\ref{eqn:24})$ and $(\ref{eqn:556})$).
%

The fascinating question then arises as to whether the radius Lipschitz condition with $\kappa$-average and non-decreasing of $\kappa$ are required for the convergence of fifth-order Newton-Traub method.  In this paper, we derived certain theorems for scheme $(\ref{eqn:12})$, motivated and influenced by the above-mentioned research efforts in this direction. Throughout the first result, generalized Lipschitz conditions were utilized to explore local convergence, which is significant for enlarging the convergence area without the need for additional assumptions, as well as an error estimate. The domain of uniqueness of solution has been derived under the center Lipschitz condition in the second theorem. A few corollaries are also mentioned.


The remainder of this paper is organized as follows: The definitions for $\kappa$-average conditions are found in section $2$.  Sections $3$ and $4$ discuss local convergence and its region of uniqueness, accordingly. The assumption that the derivative of $t$ satisfies the radius and center Lipschitz condition with weak $\kappa$-average, namely $\kappa$ and $\kappa_0$, is improved in Section $5$. $\kappa$ and $\kappa_0$ are claimed to belong to some family of positive integrable functions that are not necessarily non-decreasing for convergence theorems. To demonstrate the significance of the findings, numerical examples are provided.



\section{\bf Special and Generalized Lipschitz conditions }
Throughout this context, $B(x^*, \delta)=\{x:||x-x^*||<\delta\}$ denotes a ball with radius $\delta$ and center $x^*$. \\
{\bf Definition 1.} The constraint placed on the function $G$
\begin{equation}\label{eqn:21}
||G'(x)-G'(y^\tau)||\leq \kappa(1-\tau)(||x-x^*||+||y-x^*||); \forall \ x,\ y \in B(x^*, \delta),
\end{equation}
where $y^\tau=x^*+\tau(y-x^*), 0\leq\tau\leq1,$ and $x=y\ if\ \tau=1$, is usually called radius Lipschitz condition in the ball  $B(x^*, \delta)$ 
with constant $\kappa$. \\ 
{\bf Definition 2.} The constraint imposed on the function $G$
\begin{eqnarray}\label{eqn:22}
||G'(x)-G'(x^*)||\leq 2\kappa_0||x-x^*||; \forall \ x \in B(x^*, \delta),
\end{eqnarray}
we call it the center Lipschitz condition in the ball $B(x^*, \delta)$ with constant $\kappa_0$ where $\kappa_0\leq \kappa$. 

In this scenario, replacing $\kappa$ by $\kappa_0$ in case $\kappa_0<\kappa$ leads to a wider range of initial guesses (bigger radius of convergence than in prior studies) and fewer iterations to attain error tolerance, and the uniqueness of the solution $x^*$ is also expanded ({\cite{Cho}}, \cite{Magrenan}). When $\kappa$ and $\kappa_0$ are not constants but can be a positive integrable function, in this situation, the criteria $(2.1)-(2.2)$ are respectively, are substituted by\\
{\bf Definition 3.}
\begin{eqnarray}\label{eqn:23}
||G'(x)-G'(y^\tau)||\leq \int_{\tau(\rho(x)+\rho(y))}^{\rho(x)+\rho(y)} \kappa(u) du; \forall \ x,\ y \in B(x^*, \delta), 0\leq\tau \leq1,\nonumber\\
\end{eqnarray}
{\bf Definition 4.}
\begin{eqnarray}\label{eqn:24}
||G'(x)-G'(x^*)||\leq \int_{0}^{2\rho(x)} \kappa_0(u) du; \forall \ x \in B(x^*, \delta), 
\end{eqnarray}
where $\rho(x)=||x-x^*||$ and we have $\kappa_0(u)\leq \kappa(u)$. Simultaneously, the equivalent' Lipschitz conditions' are referred to as having the $\kappa$-average or generalized Lipschitz conditions. Following that, we will begin with the below lemmas, that will be used eventually in the fundamental theorems.
\begin{Lemma}\label{lm:21}
Suppose that $G$ has a continuous derivative in $B(x^*, \delta)$ and $[G'(x^*)]^{-1}$ exists.\\
(i) If $[G'(x^*)]^{-1}G'$ satisfies the radius Lipschitz condition with the $\kappa$-average:
{\small
\begin{eqnarray}\label{eqn:25}
||[G'(x^*)]^{-1}(G'(x)-G'(y^\tau))||\leq \int_{\tau(\rho(x)+\rho(y))}^{\rho(x)+\rho(y)} \kappa(u) du; \forall \ x,\ y \in B(x^*, \delta), 0\leq\tau \leq1, \nonumber\\
\end{eqnarray}
}
where $y^\tau=x^*+\tau(y-x^*)$, $\rho(x)=||x-x^*||$ and $\kappa$ is non-decreasing, then we have
\begin{equation}\label{eqn:26}
\int_{0}^{1}||[G'(x^*)]^{-1}(G'(x)-G'(y^\tau))||\rho(y)d\tau\leq \int_{0}^{\rho(x)+\rho(y)}\kappa(u)\frac{u}{\rho(x)+\rho(y)}\rho(y)du.
\end{equation}
(ii) If $[G'(x^*)]^{-1}G'$ satisfies the center Lipschitz condition with the $\kappa_0$-average:
\begin{equation}\label{eqn:27}
||[G'(x^*)]^{-1}(G'(x^\tau)-G'(x^*))||\leq \int_{0}^{2\tau\rho(x)} \kappa_0(u) du; \forall \ x,\ y \in B(x^*, \delta), 0\leq\tau \leq1,
\end{equation}
where $\rho(x)=||x-x^*||$ and $\kappa_0$ is non-decreasing, then we have
\begin{equation}\label{eqn:28}
\int_{0}^{1}||[G'(x^*)]^{-1}(G'(x^\tau)-G'(x^*))||\rho(x)d\tau\leq \int_{0}^{2\rho(x)}\kappa_0(u)\left(\rho(x)-\frac{u}{2}\right)du.
\end{equation}
\end{Lemma}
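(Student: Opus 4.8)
The plan is to treat both parts in the same way: substitute the assumed $\kappa$-average bound into the left-hand integrand, producing an iterated integral in the two variables $\tau$ and $u$, and then interchange the order of integration. Because $\kappa$ and $\kappa_0$ are positive integrable functions, the integrands are non-negative, so Tonelli's theorem makes the interchange legitimate and no delicate justification beyond non-negativity is required.

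For part (i), first I would insert the radius Lipschitz bound \eqref{eqn:25} into the left side of \eqref{eqn:26}. Writing $s=\rho(x)+\rho(y)$ for brevity, this gives
\[
\int_0^1 \left\|[G'(x^*)]^{-1}\bigl(G'(x)-G'(y^\tau)\bigr)\right\| \rho(y)\,d\tau \le \rho(y)\int_0^1\!\!\int_{\tau s}^{s}\kappa(u)\,du\,d\tau .
\]
The key step is to rewrite the double integral over the triangular region $\{(\tau,u):0\le\tau\le1,\ \tau s\le u\le s\}$ by integrating in $\tau$ first: for each fixed $u\in[0,s]$ the constraint $\tau s\le u$ becomes $\tau\le u/s$, so the inner integral equals $\int_0^{u/s}d\tau=u/s$. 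This collapses the expression to $\rho(y)\int_0^{s}\kappa(u)\,(u/s)\,du$, which is exactly the right-hand side of \eqref{eqn:26}.

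Part (ii) follows by the identical mechanism. Substituting the center Lipschitz bound \eqref{eqn:27} and abbreviating $r=\rho(x)$ produces $r\int_0^1\!\!\int_0^{2\tau r}\kappa_0(u)\,du\,d\tau$. Now the region is $\{(\tau,u):0\le\tau\le1,\ 0\le u\le 2\tau r\}$, and integrating in $\tau$ first the constraint $u\le 2\tau r$ reads $\tau\ge u/(2r)$, so the inner $\tau$-integral equals $\int_{u/(2r)}^1 d\tau=1-u/(2r)$. Hence the expression becomes $r\int_0^{2r}\kappa_0(u)\bigl(1-u/(2r)\bigr)\,du=\int_0^{2\rho(x)}\kappa_0(u)\bigl(\rho(x)-u/2\bigr)\,du$, which is precisely the right-hand side of \eqref{eqn:28}.

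The only real obstacle is the bookkeeping: correctly identifying the region of integration in each case and reading off the new limits after exchanging $\tau$ and $u$, since a single limit error there would change the weight multiplying $\kappa$. I also note that the monotonicity hypothesis ($\kappa$, $\kappa_0$ non-decreasing) inherited from the definitions is not actually invoked in either computation; once the $\kappa$-average bound is substituted, both inequalities in fact reduce to equalities via the change in the order of integration. I retain that hypothesis only because it belongs to the standing Lipschitz framework used in the later convergence theorems.
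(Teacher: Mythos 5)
Your proposal is correct and follows essentially the same route as the paper: substitute the $\kappa$-average bound and then swap the order of integration in the resulting double integral, which turns the $\tau$-integration into the weight $u/(\rho(x)+\rho(y))$ in part (i) and $1-u/(2\rho(x))$ in part (ii). Your observation that monotonicity of $\kappa$ and $\kappa_0$ is never actually used here (the interchange needs only non-negativity) is also consistent with the paper's own computation, which likewise treats the final step as an identity.
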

\begin{proof}
The Lipschitz conditions $(\ref{eqn:25})$ and $(\ref{eqn:27})$, respectively, imply that
{\small
\begin{eqnarray}\label{eqn:29}
\int_{0}^{1}||[G'(x^*)]^{-1}(G'(x)-G'(y^\tau))||\rho(y)d\tau&\leq& \int_{0}^{1}\int_{\tau(\rho(x)+\rho(y))}^{\rho(x)+\rho(y)}\kappa(u) du \rho(y)d\tau\nonumber\\
&=&\int_{0}^{\rho(x)+\rho(y)}\kappa(u)\frac{u}{\rho(x)+\rho(y)}\rho(y)du,\nonumber\\
\int_{0}^{1}||[G'(x^*)]^{-1}(G'(x^\tau)-G'(x^*))||\rho(x)d\tau&\leq& \int_{0}^{1}\int_{0}^{2\tau\rho(x)} \kappa_0(u) du\rho(x)d\tau\nonumber\\
&=&\int_{0}^{2\rho(x)}\kappa_0(u)\left(\rho(x)-\frac{u}{2}\right)du.\nonumber
\end{eqnarray}}
where $x^\tau=x^*+\tau(x-x^*)$ and $y^\tau=x^*+\tau(y-x^*)$.
\end{proof}

\begin{Lemma}\cite{Wang2}\label{lm:22}
Suppose that $\kappa$ is positive integrable. Assume that the function $\kappa_a$ defined by relation $(\ref{eqn:56})$ is non-decreasing for some $a$ with $0\le a\le1$.
Then, $for\ each \ \beta\geq0$, the function $\varphi_{\beta,a}$ defined by
\begin{equation}\label{eqn:29}
\varphi_{\beta,a}(P)=\frac{1}{P^{a+\beta}}\int_{0}^{P}u^\beta\kappa(u)du,
\end{equation}
is also non-decreasing.
\end{Lemma}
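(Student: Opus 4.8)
The plan is to recast $\varphi_{\beta,a}$ as an integral in which the monotonicity of $\kappa_a$ transfers directly to $\varphi_{\beta,a}$, thereby avoiding any differentiation of $\kappa$ (which is only assumed integrable). First I would use the defining relation $(\ref{eqn:56})$ to eliminate $\kappa$ in favour of $\kappa_a$. Writing $\kappa_a(u)=u^{1-a}\kappa(u)$, equivalently $\kappa(u)=u^{a-1}\kappa_a(u)$, substitution into the definition of $\varphi_{\beta,a}$ gives
\[
\varphi_{\beta,a}(P)=\frac{1}{P^{a+\beta}}\int_{0}^{P}u^{\beta}\kappa(u)\,du=\frac{1}{P^{a+\beta}}\int_{0}^{P}u^{\beta+a-1}\kappa_a(u)\,du .
\]

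Next I would perform the change of variable $u=Pv$, $du=P\,dv$, which cancels the prefactor $P^{-(a+\beta)}$ exactly and yields the scale-free representation
\[
\varphi_{\beta,a}(P)=\int_{0}^{1}v^{\beta+a-1}\,\kappa_a(Pv)\,dv .
\]
This is the crux of the argument: the only dependence on $P$ now sits inside $\kappa_a(Pv)$, while the weight $v^{\beta+a-1}$ is non-negative and independent of $P$. Hence, for $0<P_1\le P_2$ and every fixed $v\in(0,1]$ we have $P_1v\le P_2v$, so the assumed monotonicity of $\kappa_a$ gives $\kappa_a(P_1v)\le\kappa_a(P_2v)$; multiplying by the non-negative weight and integrating over $v\in[0,1]$ preserves the inequality, so $\varphi_{\beta,a}(P_1)\le\varphi_{\beta,a}(P_2)$, which is exactly the claimed monotonicity.

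The hard part here is conceptual rather than computational: recognizing that the rescaling $u=Pv$ collapses the expression into $\int_{0}^{1}v^{\beta+a-1}\kappa_a(Pv)\,dv$, after which monotonicity is immediate. Once this representation is in hand, the only technical point to verify is integrability of the weight near $v=0$. Since $\beta\ge0$ and $0\le a\le1$ we have $\beta+a\ge0$, with equality only in the degenerate case $\beta=a=0$; in the admissible range $\beta+a>0$ the factor $v^{\beta+a-1}$ is integrable on $[0,1]$, and moreover $u^{\beta+a-1}\kappa_a(u)=u^{\beta}\kappa(u)$ is integrable near $0$ because $\kappa$ is integrable and $u^{\beta}$ is bounded there. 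The residual case $\beta=a=0$ is handled separately and trivially, since then $\varphi_{0,0}(P)=\int_{0}^{P}\kappa(u)\,du$ is non-decreasing simply because $\kappa$ is positive.

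As an alternative route one could instead show $\varphi_{\beta,a}'(P)\ge0$ a.e.: differentiating produces the sign of $P^{\beta+1}\kappa(P)-(a+\beta)\int_{0}^{P}u^{\beta}\kappa(u)\,du$, and after substituting $\kappa(u)=u^{a-1}\kappa_a(u)$ and bounding $\int_{0}^{P}u^{\beta+a-1}\kappa_a(u)\,du\le\kappa_a(P)\,P^{\beta+a}/(\beta+a)$ by monotonicity of $\kappa_a$, the required inequality $P^{\beta+a}\kappa_a(P)\ge(a+\beta)\int_{0}^{P}u^{\beta+a-1}\kappa_a(u)\,du$ follows at once. I prefer the change-of-variable version because it sidesteps any questions about the differentiability of $\kappa$ and makes the role of the hypothesis fully transparent.
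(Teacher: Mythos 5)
Your proof is correct. Note that the paper itself gives no proof of this lemma --- it is quoted from Wang and Li \cite{Wang2} --- so there is nothing to match line by line; the closest thing in the paper is the splitting argument at the start of the proof of Theorem 3.1, where the authors show $f\mapsto f^{-2}\int_0^f \kappa(u)u\,du$ is non-decreasing (the special case $a=1$, $\beta=1$) by writing
\begin{equation*}
\frac{1}{f_2^{2}}\int_0^{f_2}-\frac{1}{f_1^{2}}\int_0^{f_1}
=\frac{1}{f_2^{2}}\int_{f_1}^{f_2}+\Bigl(\frac{1}{f_2^{2}}-\frac{1}{f_1^{2}}\Bigr)\int_0^{f_1}
\end{equation*}
and bounding $\kappa$ from below by $\kappa(f_1)$ on $[f_1,f_2]$ and from above by $\kappa(f_1)$ on $[0,f_1]$; that same splitting, applied to $u^{\beta+a-1}\kappa_a(u)$ with exponent $a+\beta$ in place of $2$, is the standard proof of the general lemma. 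Your change of variable $u=Pv$, which collapses $\varphi_{\beta,a}(P)$ to $\int_0^1 v^{\beta+a-1}\kappa_a(Pv)\,dv$, is an equivalent but cleaner route: monotonicity becomes pointwise in the integrand, and you avoid both the two-sided bounding of $\kappa_a$ around the cut point and any differentiation of $\kappa$. You also correctly dispose of the only delicate points --- integrability of the weight $v^{\beta+a-1}$ when $\beta+a>0$ and the degenerate case $\beta=a=0$ --- which the splitting argument also needs but which are usually left implicit. No gaps.
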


\section{\bf Local convergence of  Newton-Traub  method $(\ref{eqn:12})$}
Throughout this section, we prove the existence theorem for the Newton-Traub method $(\ref{eqn:12})$ under the radius Lipschitz condition.

\begin{thm}\label{th:31}
Suppose that $G(x^*)=0$, $G$ has a continuous derivative in $B(x^*, \delta)$, $[G'(x^*)]^{-1}$ exists and $[G'(x^*)]^{-1}G'$ satisfies $(\ref{eqn:23})$, $(\ref{eqn:24})$, $\kappa_0$ and $\kappa$ are non-decreasing. Let $\delta$ satisfy the relation
\begin{eqnarray}\label{eqn:32}
\frac{ \int_{0}^{2\delta} \kappa(u)u du}{2\delta(1-\int_{0}^{2\delta} \kappa_0(u) du)}\leq 1.
\end{eqnarray}
Then, the three-step Newton-Traub method $(\ref{eqn:12})$ is convergent for all  \ $x_0\in B(x^*, \delta)$ and 
\begin{eqnarray}\label{eqn:34}
||y_t-x^*||\leq\frac{ \int_{0}^{2\rho(x_t)} \kappa(u)u du}{2(1-\int_{0}^{2\rho(x_t)} \kappa_0(u) du)}\leq\frac{C_1}{\rho(x_0)}\rho(x_t)^2,
\end{eqnarray}
{\small
\begin{eqnarray}\label{eqn:34b}
||z_{t}-x^*||\leq\frac{ \int_{0}^{\rho(x_t)+\rho(y_t)} \kappa(u)u du}{(\rho(x_t)+\rho(y_t))(1-\int_{0}^{2\rho(x_t)} \kappa_0(u) du)}\rho(y_t)\leq\frac{C_1C_2 }{\rho(x_0)\rho(y_0)}\rho(x_t)^3,\nonumber\\
\end{eqnarray}}
{\small
\begin{eqnarray}\label{eqn:34a}
||x_{t+1}-x^*||\leq\frac{ \int_{0}^{\rho(y_t)+\rho(z_t)} \kappa(u)u du}{(\rho(y_t)+\rho(z_t))(1-\int_{0}^{2\rho(y_t)} \kappa_0(u) du)}\rho(z_t)\leq\frac{  C_1^2 C_2 C_3}{\rho(x_0)^2\rho(y_0)\rho(z_0)}\rho(x_t)^5,\nonumber\\
\end{eqnarray}}
where the quantities
{\small
\begin{eqnarray}\label{eqn:36}
&&C_1=\frac{ \int_{0}^{2\rho(x_0)} \kappa(u)u du}{2\rho(x_0)(1-\int_{0}^{2\rho(x_0)} \kappa_0(u) du)},\ C_2=\frac{ \int_{0}^{\rho(x_0)+\rho(y_0)} \kappa(u)u du}{(\rho(x_0)+\rho(y_0))(1-\int_{0}^{2\rho(x_0)} \kappa_0(u) du)},\nonumber\\
&& C_3=\frac{ \int_{0}^{\rho(y_0)+\rho(z_0)} \kappa(u)u du}{(\rho(y_0)+\rho(z_0))(1-\int_{0}^{2\rho(y_0)} \kappa_0(u) du)},
\end{eqnarray}}
are less than $1$. Furthermore,
\begin{eqnarray}\label{eqn:33}
||x_t-x^*||\leq E^{5^t-1}||x_0-x^*||; \ t=1,2,\cdots, E=C_1 C_2\frac{\rho(x_0)^2}{\rho(y_0)\rho(z_0)}.
\end{eqnarray}
\end{thm}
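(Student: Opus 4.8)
The plan is to run the standard local-convergence machinery for Newton-type schemes, propagating the error of each of the three substeps into the next and closing everything with the monotonicity supplied by Lemma \ref{lm:22}. First I would secure the invertibility of $G'$ on the ball: for $x\in B(x^*,\delta)$ the center condition $(\ref{eqn:24})$ gives $\|[G'(x^*)]^{-1}(G'(x)-G'(x^*))\|\le\int_0^{2\rho(x)}\kappa_0(u)\,du$, and as soon as this quantity is below $1$ the Banach perturbation lemma yields $[G'(x)]^{-1}$ together with $\|[G'(x)]^{-1}G'(x^*)\|\le(1-\int_0^{2\rho(x)}\kappa_0(u)\,du)^{-1}$. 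Condition $(\ref{eqn:32})$ guarantees this at every point of the ball.

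Next I would produce the three leftmost inequalities in $(\ref{eqn:34})$--$(\ref{eqn:34a})$. Since $G(x^*)=0$, write $G(w)=\int_0^1 G'(w^\tau)(w-x^*)\,d\tau$ with $w^\tau=x^*+\tau(w-x^*)$, so that each substep error takes the Newton form
\begin{equation*}
y_t-x^*=[G'(x_t)]^{-1}\!\int_0^1\!\big(G'(x_t)-G'(x_t^\tau)\big)(x_t-x^*)\,d\tau,
\end{equation*}
and likewise $z_t-x^*=[G'(x_t)]^{-1}\int_0^1(G'(x_t)-G'(y_t^\tau))(y_t-x^*)\,d\tau$ and $x_{t+1}-x^*=[G'(y_t)]^{-1}\int_0^1(G'(y_t)-G'(z_t^\tau))(z_t-x^*)\,d\tau$. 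Taking norms, inserting the invertibility bound (with center $x_t$ for the first two, $y_t$ for the last), and estimating the $\tau$-integrals by Lemma \ref{lm:21}(i) applied to the pairs $(x,y)=(x_t,x_t),(x_t,y_t),(y_t,z_t)$ delivers precisely the leftmost bounds in $(\ref{eqn:34})$, $(\ref{eqn:34b})$, $(\ref{eqn:34a})$.

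Then I would convert these integral quotients into the polynomial estimates and verify $C_1,C_2,C_3<1$. The workhorse is Lemma \ref{lm:22}: the maps $P\mapsto P^{-2}\int_0^P u\kappa(u)\,du$ and $P\mapsto P^{-1}\int_0^P u\kappa(u)\,du$ (that is $\varphi_{1,1}$ and $\varphi_{1,0}$) are non-decreasing, while $P\mapsto(1-\int_0^P\kappa_0(u)\,du)^{-1}$ is non-decreasing because $\kappa_0\ge0$. Applying $\varphi_{1,1}$ with $P=2\rho(x_t)\le 2\rho(x_0)$ extracts a factor $\rho(x_t)^2/\rho(x_0)^2$ from the numerator and raises the $\kappa_0$-integral, giving $\rho(y_t)\le(C_1/\rho(x_0))\rho(x_t)^2$; the same device with $P=\rho(x_t)+\rho(y_t)$ and $P=\rho(y_t)+\rho(z_t)$ yields $(\ref{eqn:34b})$ and $(\ref{eqn:34a})$, once one records the elementary monotonicity $\rho(y_t)\le\rho(x_t)$, $\rho(z_t)\le\rho(y_t)$ (hence $\rho(x_0)\rho(x_t)\ge\rho(y_0)\rho(y_t)\ge\rho(z_0)\rho(z_t)$) needed to match the mixed denominators $\rho(x_0)\rho(y_0)$ and $\rho(x_0)^2\rho(y_0)\rho(z_0)$. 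Monotonicity of $f(P)=\frac{\int_0^P u\kappa(u)\,du}{P(1-\int_0^P\kappa_0(u)\,du)}$ with $(\ref{eqn:32})$ gives $C_1=f(2\rho(x_0))\le f(2\delta)\le1$, and the comparisons $\rho(x_0)+\rho(y_0)\le2\rho(x_0)$, $\rho(y_0)+\rho(z_0)\le2\rho(y_0)$ force $C_2\le C_1$ and $C_3\le f(2\rho(y_0))\le C_1$, so all three are below $1$. A short induction then shows every iterate remains in $B(x^*,\delta)$ with $\rho(x_t)$ strictly decreasing, which is what legitimizes re-using the Lipschitz hypotheses at each step.

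Finally, chaining $(\ref{eqn:34})$--$(\ref{eqn:34a})$ yields the one-step recursion $\rho(x_{t+1})\le M\rho(x_t)^5$ with $M=\frac{C_1^2C_2C_3}{\rho(x_0)^2\rho(y_0)\rho(z_0)}$; noting $M\rho(x_0)^4=E\,C_1C_3$ and $E\ge1$ (so that the slack $C_1C_3\le E^3$ is absorbed), an induction on $t$ upgrades the recursion to the closed form $(\ref{eqn:33})$. The obstacle I anticipate is not any single estimate but the bookkeeping in the third step: one must invoke Lemma \ref{lm:22} with the correct index, $(\beta,a)=(1,1)$ versus $(1,0)$, to pull out exactly the right power of $\rho(x_t)$, and simultaneously carry the auxiliary radius monotonicity so that the products $\rho(y_0)\rho(y_t)$ and $\rho(z_0)\rho(z_t)$ land on the intended denominators of $(\ref{eqn:34b})$ and $(\ref{eqn:34a})$.
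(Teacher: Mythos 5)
Your proposal is correct and follows essentially the same route as the paper: Banach-lemma invertibility from the center condition, the Newton-type error representation for each substep combined with Lemma \ref{lm:21}(i), monotonicity of $P\mapsto P^{-2}\int_0^P u\kappa(u)\,du$ (which the paper proves inline rather than quoting Lemma \ref{lm:22} with $(\beta,a)=(1,1)$) to pass to the $C_i$ bounds, and a closing induction for $(\ref{eqn:33})$. The only point to tighten is the strictness of $C_1<1$: your chain $f(2\rho(x_0))\le f(2\delta)\le 1$ gives only $C_1\le 1$, whereas the paper's scaling via $\varphi_{1,1}$ yields $C_1\le \rho(x_0)/\delta<1$ --- a refinement you already have the machinery for.
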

\begin{proof}
By arbitrarily selecting $x_0\in B(x^*, \delta)$, where $\delta$ fulfills the relation $(\ref{eqn:32})$, $C_1$ and $C_2$ are determined by the inequality $(\ref{eqn:36})$ are less than 1. Indeed, because $\kappa$ is monotone, we obtain
\begin{eqnarray}
\left( \frac{1}{f_2^2}\int_{0}^{f_2}-\frac{1}{f_1^2}\int_{0}^{f_1}\right)\kappa(u)udu &=\left(\frac{1}{f_2^2}\int_{f_1}^{f_2}+\left(\frac{1}{f_2^2}-\frac{1}{f_1^2}\right)\int_{0}^{f_1}\right)\kappa(u)udu \nonumber \\
&\geq \kappa(f_1)\left( \frac{1}{f_2^2}\int_{f_1}^{f_2}+\left(\frac{1}{f_2^2}-\frac{1}{f_1^2}\right)\int_{0}^{f_1}\right)udu \nonumber \\
&=\kappa(f_1)\left( \frac{1}{f_2^2}\int_{0}^{f_2}-\frac{1}{f_1^2}\int_{0}^{f_1}\right)udu=0, \nonumber
\end{eqnarray}
for $0<f_1<f_2.$ Thus, $\frac{1}{f^2}\int_{0}^{f}\kappa(u)udu$ is non-decreasing with respect to $f$. Now, we have
\begin{eqnarray}
C_1&=&\frac{ \int_{0}^{2\rho(x_0)} \kappa(u)u du}{2\rho(x_0)^2(1-\int_{0}^{2\rho(x_0)} \kappa_0(u) du)}\rho(x_0)\nonumber\\
&\leq&\frac{ \int_{0}^{2\delta} \kappa(u)u du}{2\delta^2(1-\int_{0}^{2\delta} \kappa_0(u) du)}\rho(x_0) \leq \frac{||x_0-x^*||}{\delta}<1\nonumber,\\
C_2&=&\frac{ \int_{0}^{\rho(x_0)+\rho(y_0)} \kappa(u)u du}{(\rho(x_0)+\rho(y_0))^2(1-\int_{0}^{2\rho(x_0)} \kappa_0(u) du)}(\rho(x_0+\rho(y_0))\nonumber\\
&\leq&\frac{ \int_{0}^{2\delta} \kappa(u)u du}{2\delta^2(1-\int_{0}^{2\delta} \kappa_0(u) du)}(\rho(x_0)+\rho(y_0)) \leq \frac{||x_0-x^*||+||y_0-x^*||}{2\delta}<1,\nonumber\\
C_3&=&\frac{ \int_{0}^{\rho(y_0)+\rho(z_0)} \kappa(u)u du}{(\rho(y_0)+\rho(z_0))^2(1-\int_{0}^{2\rho(y_0)} \kappa_0(u) du)}(\rho(y_0+\rho(z_0))\nonumber\\
&\leq&\frac{ \int_{0}^{2\delta} \kappa(u)u du}{2\delta^2(1-\int_{0}^{2\delta} \kappa_0(u) du)}(\rho(y_0)+\rho(z_0)) \leq \frac{||x_0-x^*||+||y_0-x^*||}{2\delta}<1\nonumber.
\end{eqnarray}
Clearly, if  $x\in B(x^*, \delta)$, then using center Lipschitz condition with the $\kappa$-average and  the relation $(\ref{eqn:32})$, we can write
\begin{eqnarray}\label{eqn:37}
||[G'(x^*)]^{-1}[G'(x)-G'(x^*)]||\leq \int_{0}^{2\rho(x)} \kappa_0(u) du \leq 1.
\end{eqnarray}
Using the Banach Lemma and the following equation,
\begin{eqnarray}
||I-([G'(x^*)]^{-1}G'(x)-I)||^{-1}=||[G'(x)]^{-1}G'(x^*)||, \nonumber
\end{eqnarray}
Using the relation $(\ref{eqn:37})$, we arrive to the following inequality.
\begin{eqnarray}\label{eqn:38}
||[G'(x)]^{-1}G'(x^*)||&\leq& \frac{1}{1-\int_{0}^{2\rho(x)} \kappa_0(u) du}.
\end{eqnarray}
Now, if $x_t\in B(x^*, \delta)$ then we may write from expression $(\ref{eqn:12})$
\begin{eqnarray}\label{eqn:39}
||y_t-x^*||&=&||x_t-x^*-[G'(x_t)]^{-1}G(x_t)|| \nonumber\\
& =&||[G'(x_t)]^{-1}[G'(x_t)(x_t-x^*)-G(x_t)+G(x^*)]||.
\end{eqnarray}
Expanding $G(x_t)$ along $x^*$ from the Taylor series expansion, we may get
{\small
\begin{eqnarray}\label{eqn:310}
G(x^*)-G(x_t)+t'(x_t)(x_t-x^*)=G'(x^*)\int_{0}^{1}[G'(x^*)]^{-1}[G'(x_t)-G'(x^\tau)]d\tau(x_t-x^*). \nonumber\\
\end{eqnarray}}
Additionally, from the expression $(\ref{eqn:23})$ and combining the equations $(\ref{eqn:39})$ and  $(\ref{eqn:310})$, we reach to
{\small
\begin{eqnarray}\label{eqn:311}
||y_t-x^*||&\leq&||[G'(x_t)]^{-1}G'(x^*)||.||\int_{0}^{1}[G'(x^*)]^{-1}[G'(x_t)-G'(x^\tau)]d\tau||.||(x_t-x^*)||\nonumber\\
&\leq& \frac{1}{1-\int_{0}^{2\rho(x_t)} \kappa_0(u) du}\int_{0}^{1}\int_{2\tau\rho(x_n)}^{2\rho(x_n)}\kappa(u)du\rho(x_t)d\tau.
\end{eqnarray}}
As a result of Lemma $(\ref{lm:21})$ and  the above expression, The first inequality of expression  $(\ref{eqn:34})$ can be obtained. By similar analogy and using the second sub-step of the scheme  $(\ref{eqn:12})$, we can write
{\small
\begin{eqnarray}\label{eqn:312a}
||z_{t}-x^*||&\le&||[G'(x_t)]^{-1}G'(x^*)||.||\int_{0}^{1}[G'(x^*)]^{-1}[G'(x_t)-G'(y^\tau)]d\tau||.||(y_t-x^*)||\nonumber\\
&\le&\frac{1}{1-\int_{0}^{2\rho(x_t)} \kappa_0(u) du}\int_{0}^{1}\int_{\tau(\rho(x_t)+\rho(y_t))}^{\rho(x_t)+\rho(y_t)}\kappa(u)du\rho(y_t)d\tau.
\end{eqnarray}}
Using Lemma $(\ref{lm:21})$  and above expression, we can get the first inequality of expression $(\ref{eqn:34b})$.
Simultaneously, re-writing the last sub-step of the scheme  $(\ref{eqn:12})$, we achieve
{\small
\begin{eqnarray}\label{eqn:312}
||x_{t+1}-x^*||&\le&||[G'(y_t)]^{-1}G'(x^*)||.||\int_{0}^{1}[t'(x^*)]^{-1}[G'(y_t)-G'(z^\tau)]d\tau||.||(z_t-x^*)||\nonumber\\
&\le&\frac{1}{1-\int_{0}^{2\rho(y_t)} \kappa_0(u) du}\int_{0}^{1}\int_{\tau(\rho(y_t)+\rho(z_t))}^{\rho(y_t)+\rho(z_t)}\kappa(u)du\rho(z_n)d\tau.
\end{eqnarray}}
Using Lemma $(\ref{lm:21})$  and above expression, we can get the first inequality of expression $(\ref{eqn:34a})$.
Furthermore, $\rho(x_t)$ and $\rho(y_t)$ are decreasing monotonically, therefore for all $t=0,1,...$, we have
\begin{eqnarray}
||y_t-x^*||&\leq& \frac{ \int_{0}^{2\rho(x_t)} \kappa(u)u du}{2(1-\int_{0}^{2\rho(x_t)} \kappa_0(u) du)}\nonumber\\
&\leq&\frac{ \int_{0}^{2\rho(x_0)} \kappa(u)u du}{2\rho(x_0)^2(1-\int_{0}^{2\rho(x_t)} \kappa_0(u) du)}2\rho(x_t)^2\leq \frac{C_1}{\rho(x_0)}\rho(x_t)^2.\nonumber
\end{eqnarray}
Also, by manipulating first inequality of expression $(\ref{eqn:34b})$, we have
{\small
\begin{eqnarray}\label{eqn:313}
||z_{t}-x^*||&\leq& \frac{ \int_{0}^{\rho(x_t)+\rho(y_t)} \kappa(u)u du}{(\rho(x_t)+\rho(y_t))^2(1-\int_{0}^{2\rho(x_t)} \kappa_0(u) du)}\rho(y_t).[\rho(x_t)+\rho(y_t)]\nonumber\\
&\leq&\frac{C_2}{\rho(x_0)+\rho(y_0)}[\rho(x_t)+\rho(y_t)] \rho(y_t) \\
& \leq & \frac{ C_1 C_2}{\rho(x_0)\rho(y_0)}\rho(x_t)^3. \nonumber
\end{eqnarray}}
Lastly, simplifying the first result in the inequality of expression $(\ref{eqn:34a})$, we have
{\small
\begin{eqnarray*}\label{eqn:315}
||x_{t+1}-x^*||&\leq& \frac{ \int_{0}^{\rho(y_t)+\rho(z_t)} \kappa(u)u du}{(\rho(y_t)+\rho(z_t))^2(1-\int_{0}^{2\rho(y_t)} \kappa_0(u) du)}\rho(z_t).[\rho(y_t)+\rho(z_t)]\nonumber\\
&\leq&\frac{C_3}{\rho(y_0)+\rho(z_0)}[\rho(y_t)\rho(z_t)+\rho(z_t)^2] \leq\frac{ C_1^2 C_2 C_3 }{\rho(x_0^2)\rho(y_0)\rho(z_0)}\rho(x_t)^5. \nonumber\\
\end{eqnarray*}}
Thus, we have derived all the expressions of inequalities  $(\ref{eqn:34})$, $(\ref{eqn:34b})$ and $(\ref{eqn:34a})$.
Now,  we use mathematical induction to prove the inequality $(\ref{eqn:33})$.
For $t=0$, the inequality $(\ref{eqn:34a})$ gives
\begin{eqnarray*}\label{eqn:313a}
||x_{1}-x^*||\leq \frac{ C_1^2 C_2 C_3 }{\rho(x_0)^2\rho(y_0)\rho(z_0)}\rho(x_0)^5.
\end{eqnarray*}
Multiplying the numerator and denominator by $\rho(y_0)^3\rho(z_0)^3$ and using the inequalities $(\ref{eqn:34b})$ and $(\ref{eqn:313})$  for $t=0$ and then doing some mathematical manipulations, the above inequality may be re-written as
\begin{eqnarray*}\label{eqn:313b}
||x_{1}-x^*||
&\leq&\left[C_1^2 C_2\frac{\rho(x_0)^2}{\rho(y_0)\rho(z_0)}\right]^{4} \rho(x_0) \nonumber\\
&\leq&E^{(5-1)} \rho(x_0). 
\end{eqnarray*}
Thus the expression $(\ref{eqn:33})$ is true for  $t=1$. Now, suppose the inequality $(\ref{eqn:33})$ is true for some integer $t > 1$.
%
%
Again multiplying the numerator and denominator by $\rho(y_0)^3\rho(z_0)^3$ and using the inequalities $(\ref{eqn:33})$ for $t=t$,  $(\ref{eqn:34b})$ and $(\ref{eqn:313})$ for $t=0$,  the above inequality preserves the form
\begin{eqnarray*}
||x_{t+1}-x^*||
&\leq&\left[C_1^2 C_2\frac{\rho(x_0)^2}{\rho(y_0)\rho(z_0)}\right]^{5^{t+1}-1} \rho(x_0) \nonumber\\
&\leq&E^{(5^{t+1}-1)} \rho(x_0) .
\end{eqnarray*}
Thus we reach at completion of the proof.
\end{proof}

\section{\bf The uniqueness ball for the optimal solution}
We will show the uniqueness theorem for the Newton-Traub approach $(\ref{eqn:12})$, using the center Lipschitz condition.
\begin{thm}\label{th:41}
Suppose that $G(x^*)=0$, $G$ has a continuous derivative in  $B(x^*, \delta)$, $[G'(x^*)]^{-1}$ exists and $[G'(x^*)]^{-1}G'$ satisfies  $(\ref{eqn:24})$. Let $\delta$ satisfy the relation
\begin{eqnarray}\label{eqn:42}
\frac{ \int_{0}^{2\delta} \kappa_0(u)(2\delta-u) du}{2\delta}\leq 1.
\end{eqnarray}
Then, the equation  $G(x)=0$ has a unique solution $x^*$ in $B(x^*, \delta)$.
\end{thm}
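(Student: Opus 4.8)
The plan is to establish uniqueness by contradiction: assume that $y^{*}\in B(x^{*},\delta)$ is a second solution of $G(x)=0$ with $y^{*}\neq x^{*}$, and show that the operator obtained by averaging $G'$ along the segment $[x^{*},y^{*}]$ is forced to be invertible, which collapses $y^{*}-x^{*}$ to zero. The entry point is the integral mean-value identity
\begin{eqnarray*}
0 = G(y^{*})-G(x^{*}) = \left(\int_{0}^{1} G'(x^{*}+\tau(y^{*}-x^{*}))\, d\tau\right)(y^{*}-x^{*}).
\end{eqnarray*}
Writing $T=\int_{0}^{1} G'(x^{*}+\tau(y^{*}-x^{*}))\, d\tau$, the whole theorem reduces to proving that $T$ is invertible, since then $T(y^{*}-x^{*})=0$ yields $y^{*}=x^{*}$.

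To prove invertibility of $T$ I would factor out $G'(x^{*})$ and apply the Banach lemma to $[G'(x^{*})]^{-1}T$. From
\begin{eqnarray*}
\|I-[G'(x^{*})]^{-1}T\| \leq \int_{0}^{1}\|[G'(x^{*})]^{-1}(G'(x^{*}+\tau(y^{*}-x^{*}))-G'(x^{*}))\|\, d\tau,
\end{eqnarray*}
I would invoke the center Lipschitz condition $(\ref{eqn:27})$ with $x^{\tau}=x^{*}+\tau(y^{*}-x^{*})$ and then the estimate $(\ref{eqn:28})$ of Lemma $\ref{lm:21}$(ii). After dividing $(\ref{eqn:28})$ by the constant $\rho(y^{*})$ (equivalently, interchanging the order of integration), this gives the clean bound
\begin{eqnarray*}
\|I-[G'(x^{*})]^{-1}T\| \leq \frac{1}{2\rho(y^{*})}\int_{0}^{2\rho(y^{*})}\kappa_{0}(u)\,(2\rho(y^{*})-u)\, du =: h(\rho(y^{*})).
\end{eqnarray*}

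The remaining step is to show $h(\rho(y^{*}))<1$, after which the Banach lemma certifies that $[G'(x^{*})]^{-1}T$, and hence $T$, is invertible. For this I would prove that $h$ is increasing: setting $P=2r$ and $\psi(P)=\frac{1}{P}\int_{0}^{P}\kappa_{0}(u)(P-u)\,du$, a direct differentiation gives $\psi'(P)=\frac{1}{P^{2}}\int_{0}^{P}u\,\kappa_{0}(u)\,du$, which is strictly positive because $\kappa_{0}$ is positive integrable. Since $\rho(y^{*})<\delta$, strict monotonicity yields $h(\rho(y^{*}))<h(\delta)$, and hypothesis $(\ref{eqn:42})$ states exactly that $h(\delta)\leq 1$; chaining the two gives $h(\rho(y^{*}))<1$, the contradiction sought.

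The main obstacle is securing the \emph{strict} inequality rather than merely $\leq 1$: hypothesis $(\ref{eqn:42})$ only controls $h$ at the endpoint $\delta$, so the argument rests on the monotonicity of $h$ together with the strict containment $\rho(y^{*})<\delta$ to push the bound strictly below $1$ and legitimize the Banach lemma. A secondary point worth verifying is that the equality in Lemma $\ref{lm:21}$(ii) used here is a pure Fubini identity, so it does not require $\kappa_{0}$ to be non-decreasing; consequently positivity and integrability of $\kappa_{0}$ are the only properties this theorem consumes, consistent with its weaker hypotheses compared with Theorem $\ref{th:31}$.
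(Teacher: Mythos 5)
Your argument is correct and follows essentially the same route as the paper: the paper writes $y^*-x^*=[G'(x^*)]^{-1}\bigl[G'(x^*)(y^*-x^*)-G(y^*)+G(x^*)\bigr]$, applies the center Lipschitz condition and Lemma \ref{lm:21}(ii) to obtain $\|y^*-x^*\|\leq\frac{1}{2\rho(y^*)}\int_0^{2\rho(y^*)}\kappa_0(u)(2\rho(y^*)-u)\,du\;\rho(y^*)$ and then compares with $(\ref{eqn:42})$, which is exactly the bound you derive for $\|I-[G'(x^*)]^{-1}T\|$, merely phrased as a contraction estimate on $\|y^*-x^*\|$ rather than as invertibility of the averaged operator $T$. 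Your explicit check that $\psi(P)=\frac{1}{P}\int_0^P\kappa_0(u)(P-u)\,du$ is strictly increasing is a useful refinement: the paper's chain terminates in $\|y^*-x^*\|\leq\|y^*-x^*\|$ and asserts a contradiction without identifying which inequality is strict, whereas your monotonicity step extracts the required strictness from $\rho(y^*)<\delta$.
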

\begin{proof}
By arbitrarily choosing $y^*\in B(x^*, r)$, $y^*\neq x^*$ and evaluating the iteration, we get
\begin{eqnarray}\label{eqn:43}
||y^*-x^*||&=&||y^*-x^*-[G'(x^*)]^{-1}G(y^*)||. \nonumber\\
&=&||[G'(x^*)]^{-1}[G'(x^*)(y^*-x^*)-G(y^*)+G(x^*)]||\nonumber\\
\end{eqnarray}
Expanding $G(y^*)$  along $\ x^*$ from the Taylor's expansion, we have
\begin{eqnarray}\label{eqn:44}
G(x^*)-G(y^*)+G'(x^*)(y^*-x^*)=\int_{0}^{1}[G'(x^*)]^{-1}[G'(y^{*\tau})-G'(x^*)]d\tau(y^*-x^*),\nonumber\\
\end{eqnarray}
Using the expression $(\ref{eqn:24})$ and combining the inequalities $(\ref{eqn:43})$ and $(\ref{eqn:44})$, we may write
{\small
\begin{eqnarray}\label{eqn:45}
||y^*-x^*||&\leq&||[G'(x^*)]^{-1}G'(x^*)||.||\int_{0}^{1}[G'(x^*)]^{-1}[G'(y^{*\tau})-G'(x^*)]d\tau||.||(y^*-x^*)||\nonumber\\
&\leq& \int_{0}^{1}\int_{0}^{2\tau\rho(y^*)}L_0(u)du\rho(y^*)d\tau.
\end{eqnarray}}
As a result of Lemma  $(\ref{lm:21})$ and  expression $(\ref{eqn:45})$, we obtain 
\begin{eqnarray}\label{eqn:46}
||y^*-x^*||&\leq&\frac{1}{2\rho(y^*)}\int_{0}^{2\rho(y^*)}\kappa_0(u)[2\rho(y^*)-u]du(y^*-x^*)\nonumber\\
&\leq& \frac{ \int_{0}^{2\delta} \kappa_0(u)(2\delta-u) du}{2\delta}\rho(y^*)\leq||y^*-x^*||.
\end{eqnarray}
But this contradicts our hypotheses. Thus, we see that $y^*=x^*$. This concludes the theorem's proof.
\end{proof}
 

Specifically, assuming that $\kappa$ and $\kappa_0$ are constants, we have the Corollaries $(\ref{cr:51})$ and $(\ref{cr:52})$  derived from Theorems $(\ref{th:31})$ and $(\ref{th:41})$, respectively.
								
\begin{Corollary}\label{cr:51}
Suppose that $x^*$ satisfies $G(x^*)=0$, $G$ has a continuous derivative in  $B(x^*, \delta)$, $[G'(x^*)]^{-1}$ exists and $[G'(x^*)]^{-1}G'$ satisfies $(\ref{eqn:21})$ and $(\ref{eqn:22})$. Let $\delta$ satisfy the relation
\begin{eqnarray}\label{eqn:48}
\delta&=&\frac{1}{2\kappa_0+\kappa}.
\end{eqnarray}
Then, the three-step Newton-Traub method $ (\ref{eqn:12})$ is convergent for all  \ $x_0\in B(x^*, \delta)$ and 
\begin{eqnarray*}\label{eqn:410}
||y_t-x^*||\leq\frac{C_1}{\rho(x_0)}\rho(x_t)^2,
\end{eqnarray*}
\begin{eqnarray*}\label{eqn:411a}
||z_{t}-x^*||\leq\frac{C_1 C_2 }{\rho(x_0)\rho(y_0)}\rho(x_t)^3,
\end{eqnarray*}
\begin{eqnarray*}\label{eqn:411}
||x_{t+1}-x^*||\leq\frac{  C_1^2 C_2 C_3}{\rho(x_0)^2\rho(y_0)\rho(z_0)}\rho(x_t)^5,
\end{eqnarray*}
where the quantities
\begin{eqnarray}\label{eqn:412}
C_1=\frac{\kappa\rho(x_0)}{1-2\kappa_0\rho(x_0)}, C_2=\frac{\kappa(\rho(x_0)+\rho(y_0))}{2(1-2\kappa_0\rho(x_0))},
C_3=\frac{\kappa(\rho(y_0)+\rho(z_0))}{2(1-2\kappa_0\rho(y_0))}, \nonumber\\
\end{eqnarray}
are less than $1$. 
\end{Corollary}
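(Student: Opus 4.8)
The plan is to derive Corollary \ref{cr:51} as the constant-coefficient specialization of Theorem \ref{th:31}, so that no independent argument is needed beyond checking that the constant hypotheses are a special case of the integral ones and then evaluating the resulting integrals in closed form. First I would confirm the reduction of the Lipschitz conditions: putting $\kappa(u)\equiv\kappa$ in $(\ref{eqn:23})$ gives $\int_{\tau(\rho(x)+\rho(y))}^{\rho(x)+\rho(y)}\kappa\,du=\kappa(1-\tau)(\rho(x)+\rho(y))$, which is exactly the radius condition $(\ref{eqn:21})$, while $\kappa_0(u)\equiv\kappa_0$ in $(\ref{eqn:24})$ gives $\int_{0}^{2\rho(x)}\kappa_0\,du=2\kappa_0\rho(x)$, which is the center condition $(\ref{eqn:22})$. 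Since positive constants are automatically positive, integrable and non-decreasing, every standing hypothesis of Theorem \ref{th:31} is met.

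Next I would check that the stated radius $\delta=1/(2\kappa_0+\kappa)$ is admissible by verifying relation $(\ref{eqn:32})$. Evaluating the two integrals in the constant case yields $\int_{0}^{2\delta}\kappa(u)u\,du=2\kappa\delta^{2}$ and $\int_{0}^{2\delta}\kappa_0(u)\,du=2\kappa_0\delta$, so the left-hand side of $(\ref{eqn:32})$ becomes
\begin{eqnarray*}
\frac{2\kappa\delta^{2}}{2\delta\left(1-2\kappa_0\delta\right)}=\frac{\kappa\delta}{1-2\kappa_0\delta},
\end{eqnarray*}
which is $\le 1$ precisely when $\delta(2\kappa_0+\kappa)\le 1$. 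The choice $\delta=1/(2\kappa_0+\kappa)$ turns this into an equality, so it is the largest admissible radius and relation $(\ref{eqn:32})$ holds.

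Finally I would compute the constants from $(\ref{eqn:36})$ under the same substitution. Using $\int_{0}^{2\rho(x_0)}\kappa u\,du=2\kappa\rho(x_0)^{2}$ and $\int_{0}^{2\rho(x_0)}\kappa_0\,du=2\kappa_0\rho(x_0)$ gives $C_1=\kappa\rho(x_0)/(1-2\kappa_0\rho(x_0))$, and the analogous evaluations over $[0,\rho(x_0)+\rho(y_0)]$ and $[0,\rho(y_0)+\rho(z_0)]$ reproduce the stated $C_2$ and $C_3$; each is $<1$ for $x_0\in B(x^*,\delta)$ by the same monotonicity estimate used in the proof of Theorem \ref{th:31}, which here degenerates to the trivial observation that $\frac{1}{f^{2}}\int_0^f\kappa u\,du\equiv\kappa/2$ is constant. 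The three error estimates then transfer verbatim from the second inequalities in $(\ref{eqn:34})$, $(\ref{eqn:34b})$ and $(\ref{eqn:34a})$. The only step deserving care is the equality case: because Theorem \ref{th:31} requires $(\ref{eqn:32})$ only as a non-strict inequality, the saturating value $\delta=1/(2\kappa_0+\kappa)$ is legitimate, and the strict bounds $C_i<1$ persist since $x_0$ lies in the open ball; there is otherwise no genuine obstacle, as the corollary is a direct specialization.
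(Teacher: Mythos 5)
Your proposal is correct and matches the paper's intent exactly: the paper offers no separate proof of Corollary \ref{cr:51}, merely stating that it is "derived from Theorem \ref{th:31}" by taking $\kappa$ and $\kappa_0$ constant, and your verification that the constant case of $(\ref{eqn:23})$--$(\ref{eqn:24})$ reduces to $(\ref{eqn:21})$--$(\ref{eqn:22})$, that $(\ref{eqn:32})$ becomes $\kappa\delta/(1-2\kappa_0\delta)\le 1$ (saturated at $\delta=1/(2\kappa_0+\kappa)$), and that $(\ref{eqn:36})$ evaluates to the stated $C_1,C_2,C_3$ is exactly the computation required. No gaps.
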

\begin{Corollary}\label{cr:52}
Suppose that $x^*$ satisfies $G(x^*)=0$, $G$ has a continuous derivative in  $B(x^*, \delta)$, $[G'(x^*)]^{-1}$ exists and $[G'(x^*)]^{-1}G'$ satisfies the assumption $(\ref{eqn:22})$. Let $\delta$ fulfill the condition
\begin{eqnarray}\label{eqn:414}
\delta&=&\frac{1}{\kappa_0}.
\end{eqnarray}
Then, the equation  $G(x)=0$ has a unique solution $x^*$ in  $B(x^*, \delta)$. Moreover, the ball radius $\delta$ depends only on $\kappa_0$.
\end{Corollary}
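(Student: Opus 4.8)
The plan is to derive this corollary as a direct specialization of Theorem \ref{th:41} to the case in which the $\kappa_0$-average degenerates into an ordinary Lipschitz constant. First I would observe that when $\kappa_0(u)\equiv\kappa_0$ is a positive constant, the generalized center Lipschitz condition (\ref{eqn:24}) collapses exactly to the classical center Lipschitz condition (\ref{eqn:22}), since
\[
\int_{0}^{2\rho(x)}\kappa_0\,du=2\kappa_0\,\rho(x)=2\kappa_0\,\|x-x^*\|.
\]
Hence the hypothesis (\ref{eqn:24}) of Theorem \ref{th:41} is satisfied verbatim under assumption (\ref{eqn:22}), and it remains only to translate the admissibility condition (\ref{eqn:42}) into the closed form (\ref{eqn:414}).

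The key computation is the elementary integral occurring in (\ref{eqn:42}). With $\kappa_0$ constant I would evaluate
\[
\int_{0}^{2\delta}\kappa_0\,(2\delta-u)\,du
=\kappa_0\left[2\delta u-\frac{u^{2}}{2}\right]_{0}^{2\delta}
=2\kappa_0\,\delta^{2},
\]
so that the left-hand side of (\ref{eqn:42}) reduces to $\dfrac{2\kappa_0\delta^{2}}{2\delta}=\kappa_0\delta$. The condition (\ref{eqn:42}) therefore reads $\kappa_0\delta\leq 1$, i.e. $\delta\leq 1/\kappa_0$. Choosing the extremal radius $\delta=1/\kappa_0$ saturates this inequality, so Theorem \ref{th:41} applies and guarantees that $x^*$ is the unique zero of $G$ in $B(x^*,1/\kappa_0)$. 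Since the admissible radius is expressed solely through $\kappa_0$, the final clause of the statement follows immediately.

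I do not anticipate a serious obstacle here: the argument is a one-line integral evaluation followed by substitution into an inequality already established in Theorem \ref{th:41}. The only points meriting a little care are confirming that replacing the integrable function $\kappa_0(u)$ by a constant genuinely recovers hypothesis (\ref{eqn:22}) rather than merely the integral form (\ref{eqn:24}), and noting that the monotonicity assumption on $\kappa_0$ used upstream in Lemma \ref{lm:21} is trivially met by a constant function, so that the chain of estimates culminating in (\ref{eqn:42}) remains valid in this specialization.
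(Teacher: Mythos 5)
Your proposal is correct and matches the paper's intent exactly: the paper states Corollary \ref{cr:52} as an immediate specialization of Theorem \ref{th:41} to constant $\kappa_0$, offering no further proof, and your computation $\frac{1}{2\delta}\int_{0}^{2\delta}\kappa_0(2\delta-u)\,du=\kappa_0\delta\leq1$ is precisely the required substitution. Your closing remarks on recovering condition (\ref{eqn:22}) from (\ref{eqn:24}) and on the trivial monotonicity of a constant are sound and fill in the only details the paper leaves implicit.
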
       
Following that, we will apply our fundamental theorems to some specific function $\kappa$ and find the following corollaries:
\begin{Corollary}\label{cr:56}
Suppose that $x^*$ satisfies $G(x^*)=0$, $G$ has a continuous derivative in  $B(x^*, \delta)$, $[G'(x^*)]^{-1}$ exists and $[G'(x^*)]^{-1}G'$ satisfies $(\ref{eqn:23})$, $(\ref{eqn:24})$ where given fixed positive constants $\gamma$, $\kappa>0$ and $\kappa_0>0$ with $\kappa(u)=\gamma +\kappa u$ and $\kappa_0(u)=\gamma +\kappa_0u$ i.e.:
\begin{eqnarray}\label{eqn:561}
||[G'(x^*)]^{-1}(G'(x)-G'(y^\tau))|| &\leq & \gamma(1-\tau)(||x-x^*||+||y-x^*||)\nonumber\\
&&+\frac{\kappa}{2}(1-\tau^2)(||x-x^*||+||y-x^*||)^2, \nonumber\\
\end{eqnarray}
and 
\begin{eqnarray}\label{eqn:561a}
||[G'(x^*)]^{-1}(G'(x)-G'(x^*))||\leq 2||x-x^*||(\gamma+\kappa_0||x-x^*||), 
\end{eqnarray}
$\forall \ x,\ y \in G(x^*, \delta), 0\leq\tau\leq 1$, where $y^\tau=x^*+\tau(y-x^*)$, $\rho(x)=||x-x^*||$. Let $\delta$ satisfy the relation
\begin{eqnarray}\label{eqn:562}
\delta&=&\frac{-3\gamma+\sqrt{9\gamma^2+(16/3)\kappa+8\kappa_0}}{8/3\kappa+4\kappa_0}\ and\ 9\gamma^2+(16/3)\kappa+8\kappa_0\geq0.\nonumber\\ 
\end{eqnarray}
Then, the three-step Newton-Traub method $ (\ref{eqn:12})$ is convergent for all  \ $x_0\in B(x^*, \delta)$ and 
\begin{eqnarray*}\label{eqn:563}
||y_t-x^*||\leq\frac{C_1}{\rho(x_0)}\rho(x_t)^2,
\end{eqnarray*}
\begin{eqnarray*}\label{eqn:564}
||z_{t}-x^*||\leq\frac{ C_1 C_2}{\rho(x_0)\rho(y_0)}\rho(x_t)^3,
\end{eqnarray*}
\begin{eqnarray*}\label{eqn:564a}
||x_{t+1}-x^*||\leq\frac{  C_1^2 C_2 C_3}{\rho(x_0)^2\rho(y_0)\rho(z_0)}\rho(x_t)^5,
\end{eqnarray*}
where the quantities
\begin{eqnarray}\label{eqn:565}
C_1&=&\frac{\rho(x_0)[\gamma+4/3\kappa\rho(x_0)]}{[1-2\gamma \rho(x_0)-2\kappa_0\rho(x_0)^2]},\nonumber\\
C_2&=&\frac{\rho(x_0)+\rho(y_0)[\gamma/2+\kappa/3(\rho(x_0)+\rho(y_0)]}{[1-2\gamma \rho(x_0)-2\kappa_0\rho(x_0)^2]},\nonumber\\
C_3&=&\frac{\rho(y_0)+\rho(z_0)[\gamma/2+\kappa/3(\rho(y_0)+\rho(z_0)]}{[1-2\gamma \rho(y_0)-2\kappa_0\rho(y_0)^2]},
\end{eqnarray}
are less than $1$. 
\end{Corollary}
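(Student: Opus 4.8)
The plan is to specialize Theorem \ref{th:31} to the weight functions $\kappa(u)=\gamma+\kappa u$ and $\kappa_0(u)=\gamma+\kappa_0 u$. Since $\gamma,\kappa,\kappa_0>0$, both functions are positive, integrable, and non-decreasing, so the structural hypotheses on $\kappa$ and $\kappa_0$ in Theorem \ref{th:31} are automatically satisfied; what remains is to translate the abstract integral conditions $(\ref{eqn:23})$--$(\ref{eqn:24})$ and the admissibility relation $(\ref{eqn:32})$ into the explicit polynomial data claimed in the statement.

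First I would check that $(\ref{eqn:561})$ and $(\ref{eqn:561a})$ are nothing but the generalized Lipschitz conditions $(\ref{eqn:23})$ and $(\ref{eqn:24})$ for these particular weights. Writing $s=\rho(x)+\rho(y)$ and integrating $\gamma+\kappa u$ from $\tau s$ to $s$ yields $\gamma s(1-\tau)+\tfrac{\kappa}{2}s^2(1-\tau^2)$, which is exactly the right-hand side of $(\ref{eqn:561})$; likewise integrating $\gamma+\kappa_0 u$ from $0$ to $2\rho(x)$ gives $2\rho(x)(\gamma+\kappa_0\rho(x))$, the right-hand side of $(\ref{eqn:561a})$. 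Hence Theorem \ref{th:31} applies once the radius condition is verified.

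Next I would derive the radius $(\ref{eqn:562})$. A direct computation gives $\int_0^{2\delta}\kappa(u)u\,du=2\gamma\delta^2+\tfrac{8}{3}\kappa\delta^3$ and $\int_0^{2\delta}\kappa_0(u)\,du=2\gamma\delta+2\kappa_0\delta^2$. Substituting these into $(\ref{eqn:32})$ and cancelling a factor $2\delta$ reduces the condition to $\delta\big(\gamma+\tfrac{4}{3}\kappa\delta\big)\le 1-2\gamma\delta-2\kappa_0\delta^2$; taking equality to extract the largest admissible radius produces the quadratic $\big(\tfrac{4}{3}\kappa+2\kappa_0\big)\delta^2+3\gamma\delta-1=0$, whose discriminant $9\gamma^2+\tfrac{16}{3}\kappa+8\kappa_0$ is manifestly non-negative and whose positive root is precisely $(\ref{eqn:562})$. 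Finally, substituting the same integral evaluations (now with upper limits $2\rho(x_0)$, $\rho(x_0)+\rho(y_0)$, and $\rho(y_0)+\rho(z_0)$) into the definitions $(\ref{eqn:36})$ and simplifying yields the explicit forms of $C_1,C_2,C_3$ recorded in $(\ref{eqn:565})$; since $\delta$ satisfies $(\ref{eqn:32})$, Theorem \ref{th:31} then guarantees $C_1,C_2,C_3<1$ and delivers the three error estimates verbatim.

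The only genuine work lies in the algebraic bookkeeping of the penultimate step---forming and solving the quadratic for $\delta$ and matching its coefficients and sign conventions against $(\ref{eqn:562})$. Everything else is routine substitution of elementary integrals into results already proved, so I expect no conceptual obstacle beyond careful arithmetic.
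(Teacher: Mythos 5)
Your proposal is correct and follows exactly the route the paper intends: the corollary is stated without a written proof as a direct specialization of Theorem \ref{th:31} to $\kappa(u)=\gamma+\kappa u$ and $\kappa_0(u)=\gamma+\kappa_0 u$, and your integral evaluations, the reduction of $(\ref{eqn:32})$ to the quadratic $\bigl(\tfrac{4}{3}\kappa+2\kappa_0\bigr)\delta^2+3\gamma\delta-1=0$, and the resulting expressions for $\delta$ and $C_1,C_2,C_3$ all check out against $(\ref{eqn:562})$ and $(\ref{eqn:565})$.
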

\begin{Corollary}\label{cr:57}
Suppose that $x^*$ satisfies $G(x^*)=0$, $G$ has a continuous derivative in  $B(x^*, \delta)$, $[G'(x^*)]^{-1}$ exists and $[G'(x^*)]^{-1}G'$ satisfies  $(\ref{eqn:24})$ where given fixed positive constants $\gamma$ and $\kappa_0>0$ with $\kappa_0(u)=\gamma +\kappa_0u$ i.e.:
\begin{eqnarray}\label{eqn:571}
||[G'(x^*)]^{-1}(G'(x)-G'(x^*))||\leq 2||x-x^*||(\gamma+\kappa_0||x-x^*||); \forall \ x \in B(x^*, \delta), \nonumber\\
\end{eqnarray}
where $\rho(x)=||x-x^*||$. Let $\delta$ satisfy the relation
\begin{eqnarray}\label{eqn:572}
\delta&=&\frac{2\gamma-\sqrt{4\gamma^2-(16/3)\kappa_0}}{(8/3)\kappa_0}\ and\ 4\gamma^2-(16/3)\kappa_0\geq0.
\end{eqnarray}
Then, the equation  $G(x)=0$ has a unique solution $x^*$ in  $B(x^*, \delta)$. Moreover, the ball radius $\delta$ depends only on $\kappa_0$ and $\gamma$.
\end{Corollary}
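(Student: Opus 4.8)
The plan is to derive this statement as a direct specialization of Theorem $(\ref{th:41})$ to the concrete weight function $\kappa_0(u)=\gamma+\kappa_0 u$, so that no new convergence machinery is needed. First I would check that the center Lipschitz condition $(\ref{eqn:24})$ with this choice of $\kappa_0$ reproduces the displayed inequality $(\ref{eqn:571})$. This is immediate: integrating, $\int_0^{2\rho(x)}(\gamma+\kappa_0 u)\,du = 2\gamma\rho(x)+\tfrac{1}{2}\kappa_0(2\rho(x))^2 = 2\rho(x)\bigl(\gamma+\kappa_0\rho(x)\bigr)$, which is exactly $(\ref{eqn:571})$. Hence the hypotheses of Theorem $(\ref{th:41})$ hold once $\delta$ is chosen to satisfy the abstract radius condition $(\ref{eqn:42})$.

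The substantive step is to turn the integral condition $(\ref{eqn:42})$ into the explicit formula for $\delta$. To that end I would evaluate $\int_0^{2\delta}\kappa_0(u)(2\delta-u)\,du$ for the specific $\kappa_0$. Expanding $(\gamma+\kappa_0 u)(2\delta-u)$ and integrating term by term over $[0,2\delta]$ yields $2\gamma\delta^2+\tfrac{4}{3}\kappa_0\delta^3$; dividing by $2\delta$ as required by $(\ref{eqn:42})$ reduces the constraint to the single scalar inequality $\gamma\delta+\tfrac{2}{3}\kappa_0\delta^2\le 1$.

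Finally, I would take the boundary (largest admissible radius) case, giving the quadratic $\tfrac{2}{3}\kappa_0\delta^2+\gamma\delta-1=0$, and select its relevant root to obtain the closed form in $(\ref{eqn:572})$. Because Theorem $(\ref{th:41})$ already delivers uniqueness of $x^*$ on $B(x^*,\delta)$ for every $\delta$ obeying $(\ref{eqn:42})$, feeding in this explicit $\delta$ completes the argument and makes transparent that the radius depends only on $\gamma$ and $\kappa_0$.

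The computations are routine; the only point demanding care is the endgame bookkeeping — correctly identifying which root of the quadratic is the positive admissible one and verifying the accompanying sign condition on the discriminant, so that the displayed expression for $\delta$ in $(\ref{eqn:572})$ is real and positive. That reconciliation of the integrated cubic with the stated form is where I would concentrate the checking.
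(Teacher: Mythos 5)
Your strategy is exactly the one the paper intends (it offers no written proof of this corollary, only the remark that it is obtained by "applying the fundamental theorems to some specific function $\kappa$"), and your reduction is carried out correctly up to the last step: condition $(\ref{eqn:571})$ is indeed the specialization of $(\ref{eqn:24})$ to $\kappa_0(u)=\gamma+\kappa_0 u$, and substituting this $\kappa_0$ into $(\ref{eqn:42})$ gives $\frac{1}{2\delta}\bigl(2\gamma\delta^2+\tfrac{4}{3}\kappa_0\delta^3\bigr)=\gamma\delta+\tfrac{2}{3}\kappa_0\delta^2\le 1$, as you computed.

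The genuine problem sits precisely in the step you postponed as "endgame bookkeeping": the reconciliation cannot be carried out. The boundary case of your inequality is $\tfrac{2}{3}\kappa_0\delta^2+\gamma\delta-1=0$, whose unique positive root is
\begin{equation*}
\delta=\frac{-3\gamma+\sqrt{9\gamma^2+24\kappa_0}}{4\kappa_0},
\end{equation*}
which is always real and positive and requires no discriminant restriction. The displayed radius $(\ref{eqn:572})$ is instead a root of the different quadratic $\tfrac{4}{3}\kappa_0\delta^2-2\gamma\delta+1=0$ (hence the spurious condition $4\gamma^2-\tfrac{16}{3}\kappa_0\ge 0$), and it does not follow from $(\ref{eqn:42})$. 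Worse, rationalizing gives $\delta=2/\bigl(2\gamma+\sqrt{4\gamma^2-\tfrac{16}{3}\kappa_0}\bigr)$, and one checks that $\gamma\delta+\tfrac{2}{3}\kappa_0\delta^2=2\gamma\delta-\tfrac12>1$ whenever $\tfrac{2}{3}\gamma^2<\kappa_0\le\tfrac{3}{4}\gamma^2$, so the stated $\delta$ can actually violate the hypothesis of Theorem $(\ref{th:41})$. Your version is the one consistent with the paper's own pattern: the analogous Corollary $(\ref{cr:56})$ has the $-3\gamma+\sqrt{9\gamma^2+\cdots}$ form, and your root degenerates to $\delta=1/\gamma$ as $\kappa_0\to 0$, matching Corollary $(\ref{cr:52})$. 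So either state and prove the corollary with the corrected radius, or flag $(\ref{eqn:572})$ as an error in the source; do not expect the stated formula to emerge from your (correct) computation.
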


\section{\bf Local Convergence under weak $\kappa$-average}
This section presents the findings of a re-examination of the requirements and radius of convergence of the considered scheme, which were previously stated in the first theorem, although $\kappa$ is not regarded a non-decreasing function. The convergence order has been observed to be decreasing. The second theorem in this section yields a result identical to theorem $(\ref{th:51})$, but with the center Lipschitz condition assumed.
\begin{thm}\label{th:51}
Suppose that $G(x^*)=0$, $G$ has a continuous derivative in  $B(x^*, \delta)$, $[G'(x^*)]^{-1}$ exists and $[G'(x^*)]^{-1}G'$ satisfies the assumptions $(\ref{eqn:23})$, $(\ref{eqn:24})$, $\kappa_0$ and $\kappa$ are positive integrable. Let $\delta$ satisfy
\begin{eqnarray}\label{eqn:52}
\int_{0}^{2\delta}(\kappa(u)+\kappa_0(u))du\leq 1.
\end{eqnarray}
Then, the three-step Newton-Traub method $(\ref{eqn:12})$ is convergent for all $x_0\in B(x^*, \delta)$ and 
\begin{eqnarray}\label{eqn:54}
||y_t-x^*||&\leq&\frac{ \int_{0}^{2\rho(x_t)} \kappa(u)u du}{2(1-\int_{0}^{2\rho(x_t)} \kappa_0(u) du)}\leq q_1\rho(x_t),
\end{eqnarray}
\begin{eqnarray}\label{eqn:54b}
||z_{t}-x^*||&\leq&\frac{ \int_{0}^{\rho(x_t)+\rho(y_t)} \kappa(u)u du}{(\rho(x_t)+\rho(y_t))(1-\int_{0}^{2\rho(x_t)} \kappa_0(u) du)}\rho(y_t)\leq q_2 q_1\rho(x_t),\nonumber\\
\end{eqnarray}
\begin{eqnarray}\label{eqn:54a}
||x_{t+1}-x^*||&\leq&\frac{ \int_{0}^{\rho(y_t)+\rho(z_t)} \kappa(u)u du}{(\rho(y_t)+\rho(z_t))(1-\int_{0}^{2\rho(y_t)} \kappa_0(u) du)}\rho(z_t)\leq q_3 q_2 q_1\rho(x_t),\nonumber\\
\end{eqnarray}
where the quantities
\begin{eqnarray}\label{eqn:55}
&& q_1=\frac{ \int_{0}^{2\rho(x_0)} \kappa(u)du}{1-\int_{0}^{2\rho(x_0)} \kappa_0(u) du},\ q_2=\frac{ \int_{0}^{\rho(x_0)+\rho(y_0)} \kappa(u)du}{1-\int_{0}^{2\rho(x_0)} \kappa_0(u) du},\nonumber\\
&& q_3=\frac{ \int_{0}^{\rho(y_0)+\rho(z_0)} \kappa(u)du}{1-\int_{0}^{2\rho(y_0)} \kappa_0(u) du},
\end{eqnarray}
are less than $1$. Moreover,
\begin{eqnarray}\label{eqn:53}
||x_t-x^*||\leq (q_1q_2 q_3)^{t}||x_0-x^*||,\ t=1,2,....
\end{eqnarray}
Furthermore, suppose that the function $\kappa_{a}$ is defined by 
\begin{eqnarray}\label{eqn:56}
\kappa_a(P)=P^{1-a}\kappa(P),
\end{eqnarray}
is non-decreasing for some $a$ with $0\le a\le1$ and $\delta$ satisfies
\begin{equation}\label{eqn:57}
\frac{1}{2\delta}\int_{0}^{2\delta}(2\delta\kappa_0(u)+u\kappa(u))du\le1.
\end{equation}
Then, the three-step Newton-Traub method $(\ref{eqn:12})$ is convergent for all \ $x_0\in B(x^*, \delta)$ and 
\begin{eqnarray}\label{eqn:58}
||x_t-x^*||\leq E^{(1+3a+a^2)^t-1}||x_0-x^*||,\ t=1,2,\cdots, E=C_1^2 C_2\frac{\rho(x_0)^2}{\rho(y_0)\rho(z_0)},\nonumber\\
\end{eqnarray}
where the quantities $C_1$ and $C_2$ are defined in the equation $(\ref{eqn:36})$ and are less than $1$.
%
%
\end{thm}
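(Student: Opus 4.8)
The plan is to treat the statement as two linked results and prove them in the order they appear: first the coarse estimates (\ref{eqn:54})--(\ref{eqn:53}), valid for any positive integrable $\kappa,\kappa_0$, and then the sharpened geometric rate (\ref{eqn:58}) obtained once the extra monotonicity of $\kappa_a$ is available. For the first part I would reuse the skeleton of the proof of Theorem \ref{th:31} up to the point where the Taylor expansion of $G$ about $x^*$, the Banach-lemma bound (\ref{eqn:37})--(\ref{eqn:38}), and the center Lipschitz condition produce the first inequalities of (\ref{eqn:54}), (\ref{eqn:54b}) and (\ref{eqn:54a}). The crucial point is that these first inequalities use only the $[G'(x^*)]^{-1}$-version of the Lipschitz bound together with the interchange of the order of integration, which is an exact Fubini identity; neither step requires $\kappa$ to be non-decreasing, so the first inequalities carry over verbatim. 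To reach the $q_1,q_2,q_3$ bounds I would then drop the monotonicity argument of Theorem \ref{th:31} and use instead the elementary estimate $\int_0^{A}u\kappa(u)\,du\le A\int_0^{A}\kappa(u)\,du$, valid because $u\le A$ on $[0,A]$ and $\kappa\ge0$; cancelling the common factor $A=\rho(x_t)+\rho(y_t)$ (respectively $\rho(y_t)+\rho(z_t)$, or $2\rho(x_t)$) turns each first inequality into its $q_i$-counterpart.

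Next I would verify $q_1,q_2,q_3<1$ directly from (\ref{eqn:52}). Since $r\mapsto\int_0^{2r}\kappa(u)\,du$ is non-decreasing and $r\mapsto 1-\int_0^{2r}\kappa_0(u)\,du$ is non-increasing (both because $\kappa,\kappa_0\ge0$), and since $\rho(x_0),\rho(y_0),\rho(z_0)<\delta$, each quantity in (\ref{eqn:55}) is dominated by $\int_0^{2\delta}\kappa/(1-\int_0^{2\delta}\kappa_0)$, which (\ref{eqn:52}) forces to be $\le1$, strictly so because the radii are $<\delta$. Chaining $\rho(y_t)\le q_1\rho(x_t)$, $\rho(z_t)\le q_2\rho(y_t)$ and $\rho(x_{t+1})\le q_3\rho(z_t)$ gives $\rho(x_{t+1})\le q_1q_2q_3\,\rho(x_t)$, and a one-line induction yields (\ref{eqn:53}).

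For the sharpened rate I would invoke Lemma \ref{lm:22} with $\beta=1$: the hypothesis that $\kappa_a(P)=P^{1-a}\kappa(P)$ is non-decreasing makes $\varphi_{1,a}(P)=P^{-(a+1)}\int_0^{P}u\kappa(u)\,du$ non-decreasing. Applying this with $P=2\rho(x_t)\le2\rho(x_0)$ upgrades the first inequality of (\ref{eqn:54}) to $\rho(y_t)\le C_1\rho(x_t)^{1+a}/\rho(x_0)^{a}$, i.e.\ local order $1+a$. Substituting into the first inequality of (\ref{eqn:54b}) and using $\varphi_{1,a}$ once more with $P=\rho(x_t)+\rho(y_t)$, while estimating the residual factor $(\rho(x_t)+\rho(y_t))^{a}$ by its dominant term $(2\rho(x_t))^{a}$ of order $\rho(x_t)^{a}$, gives $\rho(z_t)\le(\text{const})\,\rho(x_t)^{1+2a}$. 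A third application with $P=\rho(y_t)+\rho(z_t)$, whose dominant term is now $\rho(y_t)$ of order $\rho(x_t)^{1+a}$, makes $(\rho(y_t)+\rho(z_t))^{a}$ of order $\rho(x_t)^{a+a^{2}}$, and multiplying by $\rho(z_t)$ of order $\rho(x_t)^{1+2a}$ produces $\rho(x_{t+1})\le(\text{const})\,\rho(x_t)^{1+3a+a^{2}}$, since $(a+a^{2})+(1+2a)=1+3a+a^{2}$. Here I would check $C_1,C_2<1$ by noting that (\ref{eqn:57}) is exactly $\psi(\delta)\le1$ for $\psi(r)=\int_0^{2r}\kappa_0(u)\,du+\tfrac{1}{2r}\int_0^{2r}u\kappa(u)\,du$, that $\psi$ is non-decreasing because its second term equals $(2r)^{a}\varphi_{1,a}(2r)$, and that $\rho(x_0)<\delta$ forces $C_1<1$ with the same monotonicity giving $C_2\le C_1<1$ (the quantities in (\ref{eqn:36})). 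Finally, writing $m=1+3a+a^{2}$ and $E=C_1^{2}C_2\rho(x_0)^{2}/(\rho(y_0)\rho(z_0))$, the per-step bound rearranges to $\rho(x_{t+1})\le E^{\,m-1}\rho(x_t)^{m}/\rho(x_0)^{m-1}$, and an induction on $b_t=\rho(x_t)/\rho(x_0)$ satisfying $b_0=1$ and $b_{t+1}\le E^{\,m-1}b_t^{m}$ delivers (\ref{eqn:58}).

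The main obstacle is the exponent bookkeeping in the second part: one must track precisely how the fractional powers produced by the three applications of $\varphi_{1,a}$, together with the leading-order behaviour of the sums $\rho(x_t)+\rho(y_t)$ and $\rho(y_t)+\rho(z_t)$, assemble into the single composite order $1+3a+a^{2}$, and then fit that order into an induction whose exponent is $(1+3a+a^{2})^{t}-1$. A secondary care point is using the monotonicity of the correct auxiliary function at each stage, namely the raw integrals of $\kappa,\kappa_0$ for the $q_i$ bounds but $\varphi_{1,a}$ for the $C_i$ bounds, because $\kappa$ itself is no longer assumed non-decreasing.
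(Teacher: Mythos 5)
Your proposal is correct and follows essentially the same route as the paper: the first inequalities come from the Taylor expansion plus the Banach-lemma bound and Lemma \ref{lm:21}, the $q_i$ bounds from the crude estimate $\int_0^A u\kappa(u)\,du\le A\int_0^A\kappa(u)\,du$ together with condition (\ref{eqn:52}), and the sharpened rate from three applications of Lemma \ref{lm:22} with $\varphi_{1,a}$ yielding orders $1+a$, $1+2a$ and $(1+a)^2+a=1+3a+a^2$, followed by the same induction on the exponent $(1+3a+a^2)^t-1$. Your explicit verification of $C_1,C_2<1$ via the monotone function $\psi$ is in fact slightly more careful than the paper's, which simply refers back to (\ref{eqn:36}), but it is not a different argument.
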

\begin{proof}
We can show that the quantities $q_1$, $q_2$ and $q_3$ described by the equation  $(\ref{eqn:55})$ as less than 1 by arbitrarily picking $x_0\in B(x^*, \delta)$, where $\delta$ fulfills the relation $(\ref{eqn:52})$. Indeed, because $\kappa$ is positive integrable function, we may obtain
\begin{eqnarray}
q_1&=&\frac{ \int_{0}^{2\rho(x_0)} \kappa(u) du}{1-\int_{0}^{2\rho(x_0)} \kappa_0(u) du}
\leq\frac{ \int_{0}^{2\delta} \kappa(u) du}{1-\int_{0}^{2\delta} \kappa_0(u) du}
<1\nonumber,\\
q_2&=&\frac{ \int_{0}^{\rho(x_0)+\rho(y_0)} \kappa(u) du}{1-\int_{0}^{2\rho(x_0)} \kappa_0(u) du}
\leq\frac{ \int_{0}^{2\delta} \kappa(u) du}{1-\int_{0}^{2\delta} \kappa_0(u) du}
<1,\nonumber\\
q_3&=&\frac{ \int_{0}^{\rho(y_0)+\rho(z_0)} \kappa(u) du}{1-\int_{0}^{2\rho(y_0)} \kappa_0(u) du}
\leq\frac{ \int_{0}^{2\delta} \kappa(u) du}{1-\int_{0}^{2\delta} \kappa_0(u) du}
<1\nonumber.
\end{eqnarray}
Obviously, if  $x \in B(x^*, \delta)$, then using center Lipschitz condition with the $\kappa$-average and the relation $(\ref{eqn:52})$, we have
\begin{eqnarray}\label{eqn:510}
||[G'(x^*)]^{-1}[G'(x)-G'(x^*)]||\leq \int_{0}^{2\rho(x)} \kappa_0(u) du; \forall \ x \in B(x^*, \delta)
\leq1. \nonumber\\
\end{eqnarray}
Using the Banach Lemma and the following equation
\begin{eqnarray}
||I-([G'(x^*)]^{-1}G'(x)-I)||^{-1}=||[G'(x)]^{-1}G'(x^*)||, \nonumber
\end{eqnarray}
we come to following inequality using the relation $(\ref{eqn:510})$
\begin{eqnarray}\label{eqn:511}
||[G'(x)]^{-1}G'(x^*)||&\leq& \frac{1}{1-\int_{0}^{2\rho(x)} \kappa_0(u) du}.
\end{eqnarray}
Hence, if $x_t\in B(x^*, \delta)$, then we may write from first sub-step of scheme $(\ref{eqn:12})$
\begin{eqnarray}\label{eqn:512}
||y_t-x^*||&=&||x_t-x^*-[G'(x_t)]^{-1}G(x_t)||\nonumber\\
&=&||[G'(x_t)]^{-1}[G'(x_t)(x_t-x^*)-G(x_t)+G(x^*)]||. \nonumber\\
\end{eqnarray}
Expanding $G(x_t)$  around  $x^*$ from the Taylor's expansion, it can written as
\begin{eqnarray}\label{eqn:513}
G(x^*)-G(x_t)+G'(x_t)(x_t-x^*)=G'(x^*)\int_{0}^{1}[G'(x^*)]^{-1}[G'(x_t)-G'(x_t^\tau)]d\tau(x_t-x^*).\nonumber\\
\end{eqnarray}
Following the assumptions  $(\ref{eqn:23})$ and combining the inequalities $(\ref{eqn:512})$ and  $(\ref{eqn:513})$, we may write
\begin{eqnarray}\label{eqn:514}
||y_t-x^*||&\leq&||[G'(x_t)]^{-1}G'(x^*)||.||\int_{0}^{1}[G'(x^*)]^{-1}[G'(x_t)-G'(x_t^\tau)]d\tau||.||(x_t-x^*)||\nonumber\\
&\leq& \frac{1}{\int_{0}^{2\rho(x_t)} \kappa_0(u) du}\int_{0}^{1}\int_{2\tau\rho(x_t)}^{2\rho(x_t)}\kappa(u)du\rho(x_t)d\tau.
\end{eqnarray}
Using the results of  Lemma $(\ref{lm:21})$ and the inequality $(\ref{eqn:511})$ in the above expression we can obtain the first inequality of $(\ref{eqn:54})$. By similar analogy for the second sub-step of the scheme $(\ref{eqn:12})$, we can write
{\small
\begin{eqnarray}\label{eqn:515a}
||z_{t}-x^*||&\le&||[G'(x_t)]^{-1}G'(x^*)||.||\int_{0}^{1}[G'(x^*)]^{-1}[G'(x_t)-G'(y_t^\tau)]d\tau||.||(y_t-x^*)||\nonumber\\
&\le&\frac{1}{\int_{0}^{2\rho(x_t)} \kappa_0(u) du}\int_{0}^{1}\int_{\tau(\rho(x_t)+\rho(y_t))}^{\rho(x_t)+\rho(y_t)}\kappa(u)du\rho(y_t)d\tau. 
\end{eqnarray}}
This way, we get the first inequality of $ (\ref{eqn:54b})$ using  the Lemma $(\ref{lm:21})$ in the above expression. Similarly, rewriting the last sub-step of the scheme  $(\ref{eqn:12})$, we achieve
{\small
\begin{eqnarray}\label{eqn:515}
||x_{t+1}-x^*||&\le&||[G'(y_t)]^{-1}G'(x^*)||.||\int_{0}^{1}[G'(x^*)]^{-1}[G'(y_t)-G'(z_t^\tau)]d\tau||.||(z_t-x^*)||\nonumber\\
&\le&\frac{1}{\int_{0}^{2\rho(y_t)} \kappa_0(u) du}\int_{0}^{1}\int_{\tau(\rho(y_t)+\rho(z_t))}^{\rho(y_t)+\rho(z_t)}\kappa(u)du\rho(z_t)d\tau. 
\end{eqnarray}}
Using Lemma $(\ref{lm:21})$ in the above expression, we can get the first inequality of $ (\ref{eqn:54a})$. Moreover, $\rho(x_t)$, $\rho(y_t)$ and $\rho(z_t)$ are decreasing monotonically, therefore for all $t=0,1,...,$ we have
\begin{eqnarray}
||y_t-x^*||\leq \frac{ \int_{0}^{2\rho(x_t)} \kappa(u)u du}{2(1-\int_{0}^{2\rho(x_t)} \kappa_0(u) du)}
\leq \frac{ \int_{0}^{2\rho(x_0)} \kappa(u) du}{1-\int_{0}^{2\rho(x_t)} \kappa_0(u) du}\rho(x_t)
\leq q_1\rho(x_t).\nonumber
\end{eqnarray}
Using the second inequality of expression $(\ref{eqn:54})$,
 we arrive at
\begin{eqnarray}\label{eqn:517}
||z_{t}-x^*||&\leq& \frac{ \int_{0}^{\rho(x_t)+\rho(y_t)} \kappa(u)u du}{(\rho(x_t)+\rho(y_t))(1-\int_{0}^{2\rho(x_t)} \kappa_0(u) du)}\rho(y_t)\nonumber\\
&\leq&\frac{ \int_{0}^{\rho(x_0)+\rho(y_0)} \kappa(u) du}{(1-\int_{0}^{2\rho(x_0)} \kappa_0(u) du)}\rho(y_t) \leq q_2q_1\rho(x_t).
\end{eqnarray}
Next, with the help of inequality of expression $(\ref{eqn:54b})$,
we are able to reach at
\begin{eqnarray}\label{eqn:517a}
||x_{t+1}-x^*||&\leq& \frac{ \int_{0}^{\rho(y_t)+\rho(z_t)} \kappa(u)u du}{(\rho(y_t)+\rho(z_t))(1-\int_{0}^{2\rho(y_t)} \kappa_0(u) du)}\rho(z_t)\nonumber\\
&\leq&\frac{ \int_{0}^{\rho(y_0)+\rho(z_0)} \kappa(u) du}{1-\int_{0}^{2\rho(y_0)} \kappa_0(u) du}\rho(z_t) \leq q_3 q_2 q_1\rho(x_t).
\end{eqnarray}
Therefore, the inequality $(\ref{eqn:53})$ may be easily deduced from the inequality $(\ref{eqn:517a})$. Furthermore, if the function $\kappa_a$ defined by the relation $(\ref{eqn:56})$ is non-decreasing for some $a$ with $0\le a\le1$ and $\delta$ is determined by the expression $(\ref{eqn:57})$, the first inequality of expression $(\ref{eqn:54})$ and Lemma $(\ref{lm:22})$ imply that
\begin{eqnarray}\label{eqn:4444c}
||y_t-x^*||&\leq&\frac{ \varphi_{1,a}(2\rho(x_t))2^a}{1-\int_{0}^{2\rho(x_t)} \kappa_0(u) du}\rho(x_t)^{a+1}\nonumber\\
&\le&\frac{ \varphi_{1,a}(2\rho(x_0))2^a}{1-\int_{0}^{2\rho(x_t)} \kappa_0(u) du}\rho(x_t)^{a+1}=\frac{C_1}{\rho(x_0)^a}\rho(x_t)^{a+1}\
\end{eqnarray}
Moreover, from the first part of inequality $(\ref{eqn:54b})$ and Lemma $(\ref{lm:22})$, we may write
 \begin{eqnarray} \label{eqn:4444b}
||z_{t}-x^*||&\leq&\frac{ \varphi_{1,a}(\rho(x_t)+\rho(y_t))(\rho(x_t)+\rho(y_t))^a}{1-\int_{0}^{2\rho(x_t)} \kappa_0(u) du}\rho(y_t),\nonumber\\
&\leq&\frac{ \varphi_{1,a}(\rho(x_0)+\rho(y_0))(\rho(x_t)+\rho(y_t))^a}{1-\int_{0}^{2\rho(x_t)} \kappa_0(u) du}\rho(y_t),\nonumber\\
&\leq&\frac{C_2}{(\rho(x_0)+\rho(y_0))^a}(\rho(x_t)+\rho(y_t))^a\rho(y_t),\\
&\leq&\frac{C_1 C_2}{\rho(x_0)^a\rho(y_0)^a}\rho(x_t)^{2a+1}. \nonumber
\end{eqnarray}
Similarly, from the first inequality of $(\ref{eqn:54a})$ and Lemma $(\ref{lm:22})$, we arrive at
 \begin{eqnarray}\label{eqn:4444a}
||x_{t+1}-x^*||&\leq&\frac{ \varphi_{1,a}(\rho(y_t)+\rho(z_t))(\rho(y_t)+\rho(z_t))^a}{1-\int_{0}^{2\rho(y_t)} \kappa_0(u) du}\rho(z_t)\nonumber\\
&\leq&\frac{ \varphi_{1,a}(\rho(y_0)+\rho(z_0))(\rho(y_t)+\rho(z_t))^a}{1-\int_{0}^{2\rho(y_t)} \kappa_0(u) du}\rho(z_t)\nonumber\\
&=&\frac{ C_1^{a+1} C_2 C_3 }{\rho(x_0)^{a+a^2}\rho(y_0)^a\rho(z_0)^a}\rho(x_t)^{(a+1)^2+a},
\end{eqnarray}
where $C_1<1$, $C_2<1$ and $C_3<1$ are determined by the expression $(\ref{eqn:36})$. 
Now,  we use mathematical induction to prove the inequality $(\ref{eqn:58})$.
For $t=0$, the above inequality becomes
\begin{eqnarray*}\label{eqn:313a}
||x_{1}-x^*||\leq \frac{ C_1^{1+a} C_2 C_3 }{\rho(x_0)^{a^2+a}\rho(y_0)^a\rho(z_0)^a}\rho(x_0)^{(a+1)^2+a}.
\end{eqnarray*}
Multiplying the numerator and denominator by $\rho(y_0)^{a^2+2a}\rho(z_0)^{a^2+2a}$ and using the inequality $(\ref{eqn:4444b})$ for $t=0$ and then doing the calculations, we can get
\begin{eqnarray}\label{eqn:313b}
||x_{1}-x^*||
&\leq&\left[C_1^2 C_2\frac{\rho(x_0)^2}{\rho(y_0)\rho(z_0)}\right]^{a^2+3a} \rho(x_0) \nonumber\\
&\leq&E^{((1+3a+a^2)-1)} \rho(x_0). 
\end{eqnarray}
Thus the expression $(\ref{eqn:58})$ is true for  $t=1$. Now, suppose the inequality $(\ref{eqn:58})$ is true for some integer $t > 1$.
%
%
Again multiplying in the numerator and denominator of the inequality $(\ref{eqn:4444a})$ by $\rho(y_0)^{a^2+2a}\rho(z_0)^{a^2+2a}$ and then using  the inequalities $(\ref{eqn:58})$ for $t=t$,  $(\ref{eqn:4444b})$, $(\ref{eqn:4444c})$ after re-arranging the terms, the above inequality preserves the form
\begin{eqnarray*}
||x_{t+1}-x^*||
&\leq&\left[C_1^2 C_2\frac{\rho(x_0)^2}{\rho(y_0)\rho(z_0)}\right]^{(1+3a+a^2)^{t+1}-1} \rho(x_0) \nonumber\\
&\leq&E^{(1+3a+a^2)^{t+1}-1)} \rho(x_0).
\end{eqnarray*}
which shows that the result is true for $t=t+1$, hence the inequality $(\ref{eqn:58})$ is true for all natural number 
and consequently $x_t$ converges to $x^*$. Hence, the proof is finished.
\end{proof}
\begin{thm}\label{th:52}
Suppose that $G(x^*)=0$, $G$ has a continuous derivative in  $B(x^*, \delta)$, $[G'(x^*)]^{-1}$ exists and $[G'(x^*)]^{-1}G'$ satisfies the assumption $(\ref{eqn:24})$ and $\kappa_0$ is positive integrable function. Let $\delta$ satisfy
\begin{eqnarray}\label{eqn:522}
\int_{0}^{2\delta}\kappa_0(u)du\leq \frac{1}{3}.
\end{eqnarray}
Then, the three-step Newton-Traub method $(\ref{eqn:12})$ is convergent for all  \\ \ $x_0\in B(x^*, \delta)$ and 
\begin{eqnarray}\label{eqn:520a}
||y_t-x^*||&\leq&\frac{2 \int_{0}^{2\rho(x_t)} \kappa_0(u) du}{1-\int_{0}^{2\rho(x_t)} \kappa_0(u) du}\rho(x_t)\leq q_1\rho(x_t), \nonumber\\
||z_{t}-x^*||&\leq&\frac{ \int_{0}^{2\rho(x_t)} \kappa_0(u) du+ \int_{0}^{2\rho(y_t)} \kappa_0(u) du}{1-\int_{0}^{2\rho(x_t)} \kappa_0(u) du}\rho(y_t)\leq q_2 q_1\rho(x_t),\nonumber\\
||x_{t+1}-x^*||&\leq&\frac{ \int_{0}^{2\rho(y_t)} \kappa_0(u) du+ \int_{0}^{2\rho(z_t)} \kappa_0(u) du}{1-\int_{0}^{2\rho(y_t)} \kappa_0(u) du}\rho(z_t)\leq q_3 q_2 q_1\rho(x_t),\nonumber\\
\end{eqnarray}
where the quantities
\begin{eqnarray}\label{eqn:524}
&& q_1=\frac{2 \int_{0}^{2\rho(x_0)} \kappa_0(u)du}{1-\int_{0}^{2\rho(x_0)} \kappa_0(u) du},\ q_2=\frac{ \int_{0}^{2\rho(x_0)} \kappa_0(u)du+\int_{0}^{2\rho(y_0)} \kappa_0(u)du}{1-\int_{0}^{2\rho(x_0)} \kappa_0(u) du}, \nonumber\\
&& q_3=\frac{ \int_{0}^{2\rho(y_0)} \kappa_0(u)du+\int_{0}^{2\rho(z_0)} \kappa_0(u)du}{1-\int_{0}^{2\rho(y_0)} \kappa_0(u) du},
\end{eqnarray}
are less than $1$. Moreover,
\begin{eqnarray}\label{eqn:523}
||x_t-x^*||\leq (q_1q_2q_3)^{t}||x_0-x^*||,\ t=1,2,... .
\end{eqnarray}
Furthermore, suppose that the function $\kappa_{a}$ defined by the relation $(\ref{eqn:56})$ is non-decreasing for some $a$ with $0\le a\le1$, then
\begin{eqnarray}\label{eqn:525}
||x_t-x^*||\leq F^{(1+3a+a^2)^t-1}||x_0-x^*||,\ t=1,2,\cdots, F=q_1^2 q_2\frac{\rho(x_0)^2}{\rho(y_0)\rho(z_0)},\nonumber\\
\end{eqnarray}
and $q_1$, $q_2$ are given by the first expression of the equation $(\ref{eqn:524})$.
\end{thm}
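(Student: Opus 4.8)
The plan is to run the same three-substep argument as in Theorem~\ref{th:51}, but to absorb the absence of the radius Lipschitz condition~(\ref{eqn:23}) by controlling \emph{every} increment of $G'$ through the center condition~(\ref{eqn:24}) alone. First I would fix $x_0\in B(x^*,\delta)$ and reproduce, verbatim from~(\ref{eqn:510})--(\ref{eqn:511}), the Banach-lemma bound $\|[G'(x)]^{-1}G'(x^*)\|\le (1-\int_0^{2\rho(x)}\kappa_0(u)\,du)^{-1}$, valid for all $x\in B(x^*,\delta)$ thanks to~(\ref{eqn:24}) and~(\ref{eqn:522}). The one genuinely new device is the estimate of a difference $G'(\cdot)-G'(\cdot^{\tau})$: lacking~(\ref{eqn:23}), I would insert $G'(x^*)$ and use the triangle inequality, e.g. $G'(x_t)-G'(x_t^{\tau})=[G'(x_t)-G'(x^*)]-[G'(x_t^{\tau})-G'(x^*)]$, bounding each bracket by~(\ref{eqn:24}). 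Since $\rho(x_t^{\tau})=\tau\rho(x_t)$, this gives $\int_0^{2\rho(x_t)}\kappa_0+\int_0^{2\tau\rho(x_t)}\kappa_0$, whose $\tau$-integral over $[0,1]$ is at most $2\int_0^{2\rho(x_t)}\kappa_0(u)\,du$; combined with the Taylor identity~(\ref{eqn:513}) and the resolvent bound, this produces the first inequality of the $y_t$-line of~(\ref{eqn:520a}).

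Applying the same splitting to the second and third substeps, now with the centered pieces evaluated at the pairs $(x_t,y_t)$ and $(y_t,z_t)$, yields the remaining first inequalities of~(\ref{eqn:520a}), whose numerators are exactly $\int_0^{2\rho(x_t)}\kappa_0+\int_0^{2\rho(y_t)}\kappa_0$ and $\int_0^{2\rho(y_t)}\kappa_0+\int_0^{2\rho(z_t)}\kappa_0$. Next I would verify $q_1,q_2,q_3<1$: writing $A:=\int_0^{2\rho(x_0)}\kappa_0$, the openness of the ball and positivity of $\kappa_0$ give $A<\int_0^{2\delta}\kappa_0\le \tfrac13$, and $q_1=\tfrac{2A}{1-A}<1$ is equivalent to $A<\tfrac13$; since $\int_0^{2\rho(y_0)}\kappa_0$ and $\int_0^{2\rho(z_0)}\kappa_0$ are also $\le A$, the same threshold forces $q_2,q_3<1$. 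An induction showing $x_t,y_t,z_t\in B(x^*,\delta)$ together with the monotone decrease of $\rho(x_t)$ then lets me replace each $\rho(x_t)$-dependent integral by its value at the starting point, converting the first inequalities of~(\ref{eqn:520a}) into the $q$-bounds and, upon composition, into the linear estimate~(\ref{eqn:523}).

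For the sharpened rate~(\ref{eqn:525}) I would invoke Lemma~\ref{lm:22} with $\kappa_0$ in the role of $\kappa$: once $\kappa_a$ (formed from $\kappa_0$) is non-decreasing, each associated $\varphi_{\beta,a}$ is non-decreasing, so I may write $\int_0^{2\rho(x_t)}\kappa_0(u)\,du=(2\rho(x_t))^a\,\varphi_{0,a}(2\rho(x_t))$ and use monotonicity of $\varphi_{0,a}$ and of $\rho(\cdot)$ to extract clean powers of $\rho(x_t)$. This upgrades the three substep bounds to order $\rho(x_t)^{a+1}$, $\rho(x_t)^{2a+1}$, and $\rho(x_t)^{(a+1)^2+a}$, with constants assembled from $q_1,q_2,q_3$, in exact parallel with~(\ref{eqn:4444c})--(\ref{eqn:4444a}); note $(a+1)^2+a=1+3a+a^2$. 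Feeding the resulting recursion into an induction of the same shape as the one closing Theorem~\ref{th:51} then produces~(\ref{eqn:525}).

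The main obstacle is bookkeeping rather than any single hard inequality. Because~(\ref{eqn:23}) is unavailable, every derivative increment now costs \emph{two} invocations of~(\ref{eqn:24}); this is precisely what forces the factor $2$ in the numerators, the sharper threshold $\tfrac13$ in~(\ref{eqn:522}) (needed so that $\tfrac{2A}{1-A}<1$), and the reduced exponent $1+3a+a^2$ in place of the fifth-order growth of Theorem~\ref{th:31}. The delicate points to watch are that these doubled, cross-term integrals collapse to exactly the numerators defining $q_1,q_2,q_3$ in~(\ref{eqn:524}), and that the monotone substitutions via Lemma~\ref{lm:22} reproduce the precise constant $F=q_1^2q_2\,\rho(x_0)^2/(\rho(y_0)\rho(z_0))$.
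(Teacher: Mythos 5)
Your proposal follows essentially the same route as the paper's proof: the decomposition $G'(\cdot)-G'(\cdot^{\tau})=[G'(\cdot)-G'(x^*)]-[G'(\cdot^{\tau})-G'(x^*)]$ with two invocations of the center condition~(\ref{eqn:24}), the use of Lemma~\ref{lm:21}(ii) to evaluate the $\tau$-integral and drop the negative term $-\tfrac12\int\kappa_0(u)u\,du$, and Lemma~\ref{lm:22} with $\varphi_{0,a}$ built from $\kappa_0$ for the sharpened rate are exactly the steps in the paper. Your explicit check that $q_1,q_2,q_3<1$ via $A=\int_0^{2\rho(x_0)}\kappa_0<\tfrac13$ is a small addition the paper leaves implicit, and your reading of $\kappa_a$ as formed from $\kappa_0$ is the correct interpretation of the statement.
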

\begin{proof}
Let $x_0\in B(x^*, \delta)$ and ${x_t}$ be the sequence generated by three-step Newton-Traub method given in $(\ref{eqn:12})$. Let $\delta$, $q_1$ and $q_2$ be determined by the expressions $(\ref{eqn:522})$ and $(\ref{eqn:524})$, respectively. Assume that $x_t\in B(x^*, \delta)$. Then
\begin{eqnarray}\label{eqn:526}
||y_t-x^*||&=&||y_t-x^*-[G'(x_t)]^{-1}G(x_t)||\nonumber\\
&=&||[G'(x_t)]^{-1}[G'(x_t)(x_t-x^*)-G(x_t)+G(x^*)]||.\nonumber\\
\end{eqnarray}
Expanding $G(x_t)$  along $\ x^*$ from Taylor's expansion, we have
{\small
\begin{eqnarray}\label{eqn:527}
G(x^*)-G(x_t)+G'(x_t)(x_t-x^*)=G'(x^*)\int_{0}^{1}[G'(x^*)]^{-1}[G'(x_t)-G'(x_t^\tau)]d\tau(x_t-x^*).\nonumber\\
\end{eqnarray}}
Following the assumptions $(\ref{eqn:24})$ of the theorem and using the equations $(\ref{eqn:526})$ and $(\ref{eqn:527})$, it can be written as
{\small
\begin{eqnarray}\label{eqn:528}
&& ||y_t-x^*||\leq ||[G'(x_t)]^{-1}G'(x^*)||.||\int_{0}^{1}[G'(x^*)]^{-1}[G'(x_t)-G'(x^*)+G'(x^*)-G'(x_t^\tau)]d\tau||\nonumber\\
&&.||(x_t-x^*)||\nonumber\\
&\leq&\frac{1}{1-\int_{0}^{2\rho(x_t)}\kappa_0(u)du}\left(\int_{0}^{1}\int_{0}^{2\tau\rho(x_t)}\kappa_0(u)du\rho(x_t)d\tau+\int_{0}^{1}\int_{0}^{2\rho(x_t)}\kappa_0(u)du\rho(x_t)d\tau\right).\nonumber\\
\end{eqnarray}} 
In virtue of Lemma $(\ref{lm:21})$,  the above inequality takes the form
\begin{eqnarray*}\label{eqn:529}
||y_t-x^*||&\leq&\frac{2\int_{0}^{2\rho(x_t)}\kappa_0(u)du\rho(x_t)-\frac{1}{2}\int_{0}^{2\rho(x_t)}\kappa_0(u)udu}{1-\int_{0}^{2\rho(x_t)}\kappa_0(u)du}\nonumber\\
&\leq& \frac{2\int_{0}^{2\rho(x_t)}\kappa_0(u)du}{1-\int_{0}^{2\rho(x_t)}\kappa_0(u)du}\rho(x_t)= q_1\rho(x_t),
\end{eqnarray*}
which is equivalent to the first inequality of $(\ref{eqn:520a})$. Using a similar analogy and the second sub-step of the technique $(\ref{eqn:12})$, we may write
{\small
\begin{eqnarray}\label{eqn:5210}
&& ||z_{t}-x^*||\le||[G'(x_t)]^{-1}G'(x^*)||\left(||\int_{0}^{1}[G'(x^*)]^{-1}[G'(x_t)-G'(x^*)]d\tau||.||(y_t-x^*)||\right)\nonumber\\
&&+\left(||\int_{0}^{1}[G'(x^*)]^{-1}[G'(x^*)-G'(y_t^\tau)]d\tau||.||(y_t-x^*)||\right)\nonumber\\ 
&\leq&\frac{1}{1-\int_{0}^{2\rho(x_t)}\kappa_0(u)du}\left(\int_{0}^{1}\int_{0}^{2\tau\rho(y_t)}\kappa_0(u)du\rho(y_t)d\tau+\int_{0}^{1}\int_{0}^{2\rho(x_t)}\kappa_0(u)du\rho(y_t)d\tau\right). \nonumber\\
\end{eqnarray}}
As a result of Lemma $(\ref{lm:21})$, the above expression takes the form \\
{\small
\begin{eqnarray*}\label{eqn:5211}
||z_{t}-x^*||&\leq&\frac{\int_{0}^{2\rho(x_n)}\kappa_0(u)du\rho(y_t)+\int_{0}^{2\rho(y_t)}\kappa_0(u)du\rho(y_t)-\frac{1}{2}\int_{0}^{2\rho(y_t)}\kappa_0(u)udu}{1-\int_{0}^{2\rho(x_t)}\kappa_0(u)du}\nonumber\\
&\leq& \frac{\int_{0}^{2\rho(x_t)}\kappa_0(u)du\rho(y_t)+\int_{0}^{2\rho(y_t)}\kappa_0(u)du\rho(y_t)}{1-\int_{0}^{2\rho(x_t)}\kappa_0(u)du}\nonumber\\
&=& q_2q_1\rho(x_t),
\end{eqnarray*}}
Simultaneously, from the final sub-step of the scheme $(\ref{eqn:12})$, we can write
{\small
\begin{eqnarray}\label{eqn:5210a}
&&||x_{t+1}-x^*||\le||[G'(y_t)]^{-1}G'(x^*)||\left(||\int_{0}^{1}[G'(x^*)]^{-1}[G'(y_t)-G'(x^*)]d\tau||.||(z_t-x^*)||\right)\nonumber\\
&&+\left(||\int_{0}^{1}[G'(x^*)]^{-1}[G'(x^*)-G'(z_t^\tau)]d\tau||.||(z_t-x^*)||\right)\nonumber\\ 
&\leq&\frac{1}{1-\int_{0}^{2\rho(y_t)}\kappa_0(u)du}\left(\int_{0}^{1}\int_{0}^{2\tau\rho(y_t)}\kappa_0(u)du\rho(z_t)d\tau+\int_{0}^{1}\int_{0}^{2\rho(z_t)}\kappa_0(u)du\rho(z_t)d\tau\right).\nonumber\\
\end{eqnarray}}
Because of Lemma $(\ref{lm:21})$, the above expression leads to \\
{\small
\begin{eqnarray*}\label{eqn:5211a}
||x_{t+1}-x^*||&\leq&\frac{\int_{0}^{2\rho(y_t)}\kappa_0(u)du\rho(z_t)+\int_{0}^{2\rho(z_t)}\kappa_0(u)du\rho(z_t)-\frac{1}{2}\int_{0}^{2\rho(z_t)}\kappa_0(u)udu}{1-\int_{0}^{2\rho(y_t)}\kappa_0(u)du}\nonumber\\
&\leq& \frac{\int_{0}^{2\rho(y_t)}\kappa_0(u)du\rho(z_t)+\int_{0}^{2\rho(z_t)}\kappa_0(u)du\rho(z_t)}{1-\int_{0}^{2\rho(y_t)}\kappa_0(u)du}\nonumber\\
&=& q_3q_2q_1\rho(x_n),
\end{eqnarray*}}
where $q_1<1$, $q_2<1$ and $q_3<1$ are determined by the relation $(\ref{eqn:524})$.
Also, it can be seen that inequality $(\ref{eqn:523})$ may be easily derived from the second expression $(\ref{eqn:520a})$ and hence $x_t$ converges to $x^*$.\

 Furthermore, if the function $\kappa_a$ defined by the relation $(\ref{eqn:56})$ is non-decreasing for some $a$ with $0\le a\le1$ and $\delta$ is determined by the inequality $(\ref{eqn:522})$, it follows from the first inequality of the expression $(\ref{eqn:520a})$ and Lemma $(\ref{lm:22})$ that
\begin{eqnarray*}
||y_t-x^*||&\leq&\frac{2 \varphi_{0,a}(2\rho(x_t))2^a}{1-\int_{0}^{2\rho(x_t)} \kappa_0(u) du}\rho(x_t)^{a+1},\\
&\le&\frac{2\varphi_{0,a}(2\rho(x_0))2^a}{1-\int_{0}^{2\rho(x_0)} \kappa_0(u) du}\rho(x_t)^{a+1}
=\frac{q_1}{\rho(x_0)^a}\rho(x_t)^{a+1}.
\end{eqnarray*}
Moreover, from the second inequality of expression $(\ref{eqn:520a})$ and Lemma $(\ref{lm:22})$, we get
 \begin{eqnarray*}
||z_{t}-x^*||&\leq&\frac{ \varphi_{0,a}(2\rho(x_t))+ \varphi_{0,a}(2\rho(y_t)) (2\rho(x_t))^a \rho(y_t)}{1-\int_{0}^{2\rho(x_t)} \kappa_0(u) du}\rho(y_t),\\
&\leq&\frac{ \varphi_{0,a}(2\rho(x_0))+ \varphi_{0,a}(2\rho(y_0)) (2\rho(x_t))^a \rho(y_n)}{1-\int_{0}^{2\rho(x_t)} \kappa_0(u) du}\rho(y_t),\\
&=&\frac{q_1 q_2}{\rho(x_0)^a\rho(y_0)^a}\rho(x_t)^{2a+1}.
\end{eqnarray*}
Finally, the second inequality of expression $(\ref{eqn:520a})$ of the last sub-step and Lemma $(\ref{lm:22})$ gives
 \begin{eqnarray}\label{eqn:5212}
||x_{t+1}-x^*||&\leq&\frac{ \varphi_{0,a}(2\rho(y_t))+ \varphi_{0,a}(2\rho(z_t)) (2\rho(y_t))^a \rho(z_t)}{1-\int_{0}^{2\rho(y_t)} \kappa_0(u) du}\rho(z_t),\nonumber\\
&\leq&\frac{ \varphi_{0,a}(2\rho(y_0))+ \varphi_{0,a}(2\rho(z_0)) (2\rho(y_t))^a \rho(z_t)}{1-\int_{0}^{2\rho(y_t)} \kappa_0(u) du}\rho(z_t),\nonumber \\
&=&\frac{ q_1^{a+1} q_2 q_3}{\rho(x_0)^{a+a^2}\rho(y_0)^a\rho(z_0)^a}\rho(x_t)^{(a+1)^2+a}.
\end{eqnarray}
Now, we can prove that the expression $(\ref{eqn:525})$ is true for all integers $t\geq1$ through mathematical induction in the same line as previous result.
Accordingly, $x_t$ eventually converges to $x^*$.
\end{proof}


Next, the outcomes of Theorems $(\ref{th:51})$ and $(\ref{th:52})$ will then be re-captured using our newly improved theorems on some special functions $\kappa$.						
\begin{Corollary}\label{cr:53}
Suppose that $x^*$ satisfies $G(x^*)=0$, $G$ has a continuous derivative in  $B(x^*, \delta)$, $[G'(x^*)]^{-1}$ exists and $[G'(x^*)]^{-1}G'$ satisfies $(\ref{eqn:23})$, $(\ref{eqn:24})$ with $\kappa(u)=cau^{a-1}$ and  $\kappa_0(u)=c_0au^{a-1}$ i.e.:
\begin{equation}\label{eqn:531}
||[G'(x^*)]^{-1}(G'(x)-G'(y^\tau))||\leq c (1-\tau^a)(||x-x^*||+||y-x^*||)^a,
\end{equation}
and
\begin{eqnarray}\label{eqn:531a}
||[G'(x^*)]^{-1}(G'(x)-G'(x^*))||\leq c_02^a||x-x^*||^a,
\end{eqnarray}
$\forall \ x,\ y \in B(x^*, \delta),0\leq\tau\leq 1$, where $y^\tau=x^*+\tau(y-x^*)$, $\rho(x)=||x-x^*||$,$0<a<1$, $c>0$ and $c_0>0$. Let $\delta$ satisfy
\begin{eqnarray}\label{eqn:532}
\delta&=&\left(\frac{a+1}{2^a(c_0(a+1)+ca)}\right)^{\frac{1}{a}}.
\end{eqnarray}
Then, the three-step Newton-Traub method $(\ref{eqn:12})$ is convergent for all  \ $x_0\in B(x^*, \delta)$ and 
\begin{eqnarray*}\label{eqn:535}
||y_t-x^*||&\leq&\frac{ \int_{0}^{2\rho(x_t)} \kappa(u)u du}{2(1-\int_{0}^{2\rho(x_t)} \kappa_0(u) du)}\leq q_1\rho(x_t),
\end{eqnarray*}
\begin{eqnarray*}\label{eqn:536}
||z_{t}-x^*||&\leq&\frac{ \int_{0}^{\rho(x_t)+\rho(y_t)} \kappa(u)u du}{(\rho(x_t)+\rho(y_t))(1-\int_{0}^{2\rho(x_t)} \kappa_0(u) du)}\rho(y_t)\leq q_2 q_1\rho(x_t),\nonumber\\
\end{eqnarray*}
\begin{eqnarray*}\label{eqn:536a}
||x_{t+1}-x^*||&\leq&\frac{ \int_{0}^{\rho(y_t)+\rho(z_t)} \kappa(u)u du}{(\rho(y_t)+\rho(z_t))(1-\int_{0}^{2\rho(y_t)} \kappa_0(u) du)}\rho(z_t)\leq q_3 q_2 q_1\rho(x_t),\nonumber\\
\end{eqnarray*}
where the quantities
\begin{eqnarray}\label{eqn:534}
q_1=\frac{ca2^a\rho(x_0)^a}{(1+a)[1-2^ac_0\rho(x_0)^a]} ,\ q_2=\frac{ca(\rho(x_0)+\rho(y_0))^a}{(a+1)(1-2^ac_0\rho(x_0)^a)},\nonumber\\
q_3=\frac{ca(\rho(y_0)+\rho(z_0))^a}{(a+1)(1-2^ac_0\rho(y_0)^a)},
\end{eqnarray}
are less than $1$.
\end{Corollary}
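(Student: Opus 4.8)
The plan is to obtain Corollary \ref{cr:53} as a direct specialization of Theorem \ref{th:51} to the power-law weights $\kappa(u)=cau^{a-1}$ and $\kappa_0(u)=c_0au^{a-1}$; every ingredient is an explicit elementary integral, so the argument is verification rather than fresh analysis. First I would confirm that these weights produce the stated pointwise Lipschitz forms. Carrying out the antiderivatives in $(\ref{eqn:24})$ gives $\int_0^{2\rho(x)}\kappa_0(u)\,du=c_0(2\rho(x))^a=c_02^a\|x-x^*\|^a$, which is exactly $(\ref{eqn:531a})$, and in $(\ref{eqn:23})$ one finds $\int_{\tau(\rho(x)+\rho(y))}^{\rho(x)+\rho(y)}\kappa(u)\,du=c(\rho(x)+\rho(y))^a-c\tau^a(\rho(x)+\rho(y))^a=c(1-\tau^a)(\rho(x)+\rho(y))^a$, which is $(\ref{eqn:531})$.

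Next I would check the hypotheses of Theorem \ref{th:51}. Since $0<a<1$, the exponent $a-1\in(-1,0)$ makes $u^{a-1}$ positive and integrable at the origin, so $\kappa,\kappa_0$ are positive integrable functions. The decisive observation---and the one genuine idea in the proof---is that $\kappa_a(P)=P^{1-a}\kappa(P)=P^{1-a}\cdot caP^{a-1}=ca$ is \emph{constant}, hence trivially non-decreasing; thus Lemma \ref{lm:22} and the refined (``furthermore'') branch of Theorem \ref{th:51} apply with precisely this value of $a$. Note that $\kappa$ itself is strictly decreasing, which is exactly why this case must be handled by the weak $\kappa$-average theory of Section~$5$ rather than by Theorem \ref{th:31}. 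I would then verify that the admissibility condition $(\ref{eqn:57})$ collapses to the stated radius $(\ref{eqn:532})$: a short computation gives $\frac{1}{2\delta}\int_0^{2\delta}\bigl(2\delta\kappa_0(u)+u\kappa(u)\bigr)\,du=2^a\delta^a\frac{c_0(a+1)+ca}{a+1}$, and setting this equal to $1$ and solving for $\delta$ yields exactly $(\ref{eqn:532})$.

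With the hypotheses in place, the three integral-form estimates in the corollary are immediate from $(\ref{eqn:54})$--$(\ref{eqn:54a})$. To pin down the explicit constants $(\ref{eqn:534})$ I would evaluate $\int_0^{P}\kappa(u)u\,du=\int_0^{P}cau^a\,du=\frac{ca}{a+1}P^{a+1}$ and substitute $P=2\rho(x_t)$, $P=\rho(x_t)+\rho(y_t)$, and $P=\rho(y_t)+\rho(z_t)$ into the first inequalities. Dividing by the appropriate factor and invoking the monotone decrease of $\rho(x_t),\rho(y_t),\rho(z_t)$ (established inside Theorem \ref{th:51}) to replace the running radii at step $t$ by their values at step $0$ extracts a single power $\rho(x_t)$ and produces exactly $q_1,q_2,q_3$ as written.

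The remaining point is that $q_1,q_2,q_3<1$, and here $(\ref{eqn:532})$ does the work: the inequality $q_1<1$ rearranges to $2^a\rho(x_0)^a\bigl(ca+(a+1)c_0\bigr)<a+1$, i.e.\ $\rho(x_0)<\delta$, which holds because $x_0\in B(x^*,\delta)$. For $q_2$ and $q_3$ I would use $\rho(y_0)\le q_1\rho(x_0)<\rho(x_0)$ and $\rho(z_0)\le\rho(y_0)$, so that $\rho(x_0)+\rho(y_0)<2\rho(x_0)$ and $\rho(y_0)+\rho(z_0)<2\rho(y_0)$; raising to the $a$-th power reduces $q_2<1$ and $q_3<1$ to the already-settled $q_1$-type estimate (with $\rho(y_0)$ in place of $\rho(x_0)$ for $q_3$). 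I do not anticipate a genuine obstacle: the only care needed is bookkeeping the constants so that the refined $\frac{a}{a+1}$-type factors match $(\ref{eqn:534})$ rather than the cruder constants $(\ref{eqn:55})$ of the parent theorem.
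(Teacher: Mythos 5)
Your proposal is correct and follows the route the paper itself intends: Corollary \ref{cr:53} is stated without a separate proof as a direct specialization of Theorem \ref{th:51} to $\kappa(u)=cau^{a-1}$, $\kappa_0(u)=c_0au^{a-1}$, and your elementary integrations reproducing $(\ref{eqn:531})$, $(\ref{eqn:531a})$, the radius $(\ref{eqn:532})$ from condition $(\ref{eqn:57})$, and the refined constants $(\ref{eqn:534})$ all check out. You also correctly identify the two points that need care --- that $\kappa_a(P)=ca$ is constant so the weak--$\kappa$-average (``furthermore'') branch applies even though $\kappa$ itself is decreasing, and that $q_1<1$ must be verified directly from $(\ref{eqn:532})$ (it rearranges exactly to $\rho(x_0)<\delta$) with $q_2,q_3<1$ following by monotonicity of the $\rho$'s.
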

\begin{Corollary}\label{cr:54}
Suppose that $x^*$ satisfies $G(x^*)=0$, $G$ has a continuous derivative in  $B(x^*, \delta)$, $[G'(x^*)]^{-1}$ exists and $[G'(x^*)]^{-1}G'$ satisfies $(\ref{eqn:24})$ with  $\kappa_0(u)=c_0au^{a-1}$ i.e.:
\begin{eqnarray}\label{eqn:541}
||[G'(x^*)]^{-1}(G'(x)-G'(x^*))||\leq c_02^a||x-x^*||^a, \forall \ x \in B(x^*, \delta),
\end{eqnarray}
where $\rho(x)=||x-x^*||$, $0<a<1$ and $c_0>0$. Let $\delta$ satisfy
\begin{eqnarray}\label{eqn:542}
\delta&=&\left(\frac{1}{3c_02^a}\right)^{\frac{1}{a}}.
\end{eqnarray}
Then, the three-step Newton-Traub method $(\ref{eqn:12})$ is convergent for all  \ $x_0\in B(x^*, \delta)$ and 
\begin{eqnarray*}\label{eqn:545}
||y_t-x^*||&\leq&\frac{2 \int_{0}^{2\rho(x_t)} \kappa_0(u) du}{1-\int_{0}^{2\rho(x_t)} \kappa_0(u) du}\rho(x_t)\leq q_1\rho(x_t), \nonumber\\
||z_{t}-x^*||&\leq&\frac{ \int_{0}^{2\rho(x_t)} \kappa_0(u) du+ \int_{0}^{2\rho(y_t)} \kappa_0(u) du}{1-\int_{0}^{2\rho(x_t)} \kappa_0(u) du}\rho(y_t)\leq q_2 q_1\rho(x_t),\nonumber\\
||x_{t+1}-x^*||&\leq&\frac{ \int_{0}^{2\rho(y_t)} \kappa_0(u) du+ \int_{0}^{2\rho(z_t)} \kappa_0(u) du}{1-\int_{0}^{2\rho(y_t)} \kappa_0(u) du}\rho(z_t)\leq q_3 q_2 q_1\rho(x_t),\nonumber\\
\end{eqnarray*}
where the quantities
\begin{eqnarray}\label{eqn:544}
&& q_1=\frac{c_02^{a+1}\rho(x_0)^a}{1-2^ac_0\rho(x_0)^a} ,\ q_2=\frac{c_02^a(\rho(x_0)^a+\rho(y_0)^a)}{1-2^ac_0\rho(x_0)^a},\nonumber\\
&& q_3=\frac{c_02^a(\rho(y_0)^a+\rho(z_0)^a)}{1-2^ac_0\rho(y_0)^a},
\end{eqnarray}
are less than 1.
\end{Corollary}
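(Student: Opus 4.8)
The plan is to obtain Corollary \ref{cr:54} as the direct specialization of Theorem \ref{th:52} to the single weight $\kappa_0(u)=c_0 a u^{a-1}$ with $0<a<1$, $c_0>0$; the whole argument reduces to evaluating one elementary integral and checking the two numerical constraints, one on $\delta$ and one on $q_1,q_2,q_3$.

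First I would record the closed form of the governing integral. As $0<a<1$ and $c_0>0$, the weight $\kappa_0$ is positive and integrable on $(0,\infty)$ (though it is decreasing, hence \emph{not} non-decreasing, which is consistent with Theorem \ref{th:52} requiring only integrability), and
\[
\int_{0}^{2\rho(x)}\kappa_0(u)\,du=\int_{0}^{2\rho(x)}c_0 a u^{a-1}\,du=c_0\,(2\rho(x))^a=c_0 2^a \rho(x)^a .
\]
Writing $\rho(x)=\|x-x^*\|$ turns the center Lipschitz condition (\ref{eqn:24}) into the asserted form (\ref{eqn:541}), so the hypotheses of the corollary and of Theorem \ref{th:52} are compatible.

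Next I would verify the radius condition. Evaluating the same integral at $\rho=\delta$ gives $\int_{0}^{2\delta}\kappa_0(u)\,du=c_0 2^a\delta^a$, and the choice (\ref{eqn:542}), $\delta=\bigl(1/(3c_0 2^a)\bigr)^{1/a}$, is precisely the value producing $c_0 2^a\delta^a=\tfrac13$; hence hypothesis (\ref{eqn:522}) of Theorem \ref{th:52} holds with equality. For $x_0\in B(x^*,\delta)$ one then has $\rho(x_0)<\delta$, so $I_x:=c_0 2^a\rho(x_0)^a<\tfrac13$. Substituting the closed form into the definitions (\ref{eqn:524}) of $q_1,q_2,q_3$ reproduces the explicit expressions (\ref{eqn:544}).

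Finally I would check $q_1,q_2,q_3<1$ and invoke Theorem \ref{th:52}. Setting $I_y:=c_0 2^a\rho(y_0)^a$ and $I_z:=c_0 2^a\rho(z_0)^a$, the bound $I_x<\tfrac13$ gives $q_1=2I_x/(1-I_x)<1$ (equivalently $3I_x<1$); since $q_1<1$, the first error estimate forces $\rho(y_0)\le q_1\rho(x_0)<\rho(x_0)<\delta$, whence $I_y<I_x$ and $2I_x+I_y<3I_x<1$, i.e. $q_2<1$; analogously $\rho(z_0)\le q_2 q_1\rho(x_0)<\delta$ yields $I_z<I_y$ and $2I_y+I_z<3I_y<1$, i.e. $q_3<1$. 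The only delicate point is ensuring $y_0$ and $z_0$ remain inside $B(x^*,\delta)$ so that $I_y,I_z$ inherit the $\tfrac13$-bound coming from (\ref{eqn:522}); but this is not an extra hypothesis, as it follows inductively from $q_1,q_2<1$. With (\ref{eqn:522}) confirmed and $q_1,q_2,q_3<1$ in hand, Theorem \ref{th:52} applies verbatim and yields the three error estimates of the corollary, completing the proof.
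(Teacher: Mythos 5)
Your proposal is correct and follows exactly the route the paper intends: the corollary is stated as a direct specialization of Theorem \ref{th:52}, obtained by evaluating $\int_{0}^{2\rho}c_0au^{a-1}\,du=c_02^a\rho^a$, noting that the choice of $\delta$ makes condition (\ref{eqn:522}) hold with equality, and reading off $q_1,q_2,q_3$ from (\ref{eqn:524}). Your additional verification that $q_1,q_2,q_3<1$ (via $3I_x<1$ and the monotone decrease $\rho(z_0)\le\rho(y_0)\le\rho(x_0)$) is a sound filling-in of details the paper leaves implicit.
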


\begin{Corollary}\label{cr:55}
Suppose that $x^*$ satisfies $G(x^*)=0$, $G$ has a continuous derivative in  $B(x^*, \delta)$, $[G'(x^*)]^{-1}$ exists and $[G'(x^*)]^{-1}G'$ satisfies $(\ref{eqn:24})$ with $\kappa_0(u)=\frac{2\gamma c_0}{(1-\gamma u)^3}$ i.e.:
\begin{eqnarray}\label{eqn:551}
||[G'(x^*)]^{-1}(G'(x)-G'(x^*))||\leq  \frac{c_0}{(1-2\gamma\rho(x))^2}-c_0, \forall \ x \in B(x^*, \delta), \nonumber\\
\end{eqnarray}
where $\rho(x)=||x-x^*||$, $\gamma>0$ and $c_0>0$. Let $\delta$ satisfy
\begin{eqnarray}\label{eqn:552}
\delta&=&\frac{3c_0+1-\sqrt{3c_0(3c_0+1)}}{2\gamma(3c_0+1)}.
\end{eqnarray}
Then, three-step Newton-Traub method $(\ref{eqn:12})$ is convergent for all  \ $x_0\in B(x^*, \delta)$ and 
\begin{eqnarray*}\label{eqn:555}
||y_t-x^*||&\leq&\frac{2 \int_{0}^{2\rho(x_t)} \kappa_0(u) du}{1-\int_{0}^{2\rho(x_t)} \kappa_0(u) du}\rho(x_t)\leq q_1\rho(x_t), \nonumber\\
||z_{t}-x^*||&\leq&\frac{ \int_{0}^{2\rho(x_t)} \kappa_0(u) du+ \int_{0}^{2\rho(y_t)} \kappa_0(u) du}{1-\int_{0}^{2\rho(x_t)} \kappa_0(u) du}\rho(y_t)\leq q_2 q_1\rho(x_t),\nonumber\\
||x_{t+1}-x^*||&\leq&\frac{ \int_{0}^{2\rho(y_t)} \kappa_0(u) du+ \int_{0}^{2\rho(z_t)} \kappa_0(u) du}{1-\int_{0}^{2\rho(y_t)} \kappa_0(u) du}\rho(z_t)\leq q_3 q_2 q_1\rho(x_t),\nonumber\\
\end{eqnarray*}
where the quantities
{\small
\begin{eqnarray}\label{eqn:554}
q_1&=&\frac{2c_0-2c_0(1-2\gamma\rho(x_0))^2}{[1-2\gamma \rho(x_0)]^2(1+c_0)-c_0},\nonumber\\
q_2&=&\frac{[c_0-c_0(1-2\gamma\rho(x_0))^2](1-2\gamma\rho(y_0))^2)+[c_0-c_0(1-2\gamma\rho(y_0))^2](1-2\gamma\rho(x_0))^2)}{([1-2\gamma \rho(x_0)]^2(1+c_0)-c_0)(1-2\gamma\rho(y_0))^2)},\nonumber\\
q_3&=&\frac{[c_0-c_0(1-2\gamma\rho(y_0))^2](1-2\gamma\rho(z_0))^2)+[c_0-c_0(1-2\gamma\rho(z_0))^2](1-2\gamma\rho(y_0))^2)}{([1-2\gamma \rho(y_0)]^2(1+c_0)-c_0)(1-2\gamma\rho(z_0))^2)},\nonumber\\
\end{eqnarray}}
are less than 1.
\end{Corollary}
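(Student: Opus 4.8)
The plan is to obtain this corollary as a direct specialization of Theorem $(\ref{th:52})$, since only the center Lipschitz condition $(\ref{eqn:24})$ is assumed here. The first thing I would do is confirm that the chosen weight $\kappa_0(u)=\frac{2\gamma c_0}{(1-\gamma u)^3}$ reproduces the displayed bound $(\ref{eqn:551})$. Integrating via the substitution $v=1-\gamma u$ gives
\begin{equation}
\int_0^{2\rho(x)}\frac{2\gamma c_0}{(1-\gamma u)^3}\,du=\left[\frac{c_0}{(1-\gamma u)^2}\right]_0^{2\rho(x)}=\frac{c_0}{(1-2\gamma\rho(x))^2}-c_0,
\end{equation}
which is exactly the right-hand side of $(\ref{eqn:551})$. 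Hence the hypothesis $(\ref{eqn:24})$ required by Theorem $(\ref{th:52})$ holds with this $\kappa_0$ whenever $2\gamma\rho(x)<1$, which is guaranteed on $B(x^*,\delta)$ for the $\delta$ specified below.

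Next I would determine the admissible radius by imposing the threshold $(\ref{eqn:522})$, namely $\int_0^{2\delta}\kappa_0(u)\,du\le\frac13$. Using the integral just computed, this reads
\begin{equation}
\frac{c_0}{(1-2\gamma\delta)^2}-c_0\le\frac13,
\end{equation}
and setting equality gives $(1-2\gamma\delta)^2=\frac{3c_0}{3c_0+1}$. Taking the positive root with $1-2\gamma\delta>0$ and rationalizing yields
\begin{equation}
\delta=\frac{1}{2\gamma}\left(1-\sqrt{\frac{3c_0}{3c_0+1}}\right)=\frac{3c_0+1-\sqrt{3c_0(3c_0+1)}}{2\gamma(3c_0+1)},
\end{equation}
which is precisely $(\ref{eqn:552})$. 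This confirms that the stated $\delta$ makes Theorem $(\ref{th:52})$ directly applicable, so the convergence of the scheme $(\ref{eqn:12})$ and the three error inequalities are inherited verbatim from that theorem.

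Finally I would verify that the abstract constants $q_1,q_2,q_3$ of $(\ref{eqn:524})$ collapse to the explicit forms $(\ref{eqn:554})$. Writing $A_0=(1-2\gamma\rho(x_0))^2$ and $B_0=(1-2\gamma\rho(y_0))^2$, the integral formula gives $\int_0^{2\rho(x_0)}\kappa_0=\frac{c_0(1-A_0)}{A_0}$, with the analogous expressions at $y_0$ and $z_0$. Substituting these into the definitions of $q_1,q_2,q_3$, clearing the common denominators, and simplifying reproduces $(\ref{eqn:554})$; the bounds $q_i<1$ then follow from Theorem $(\ref{th:52})$ since $\rho(x_0),\rho(y_0),\rho(z_0)<\delta$. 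The only mildly nontrivial part is the algebraic bookkeeping in $q_2$ and $q_3$, where two integral terms must be combined over a common denominator and the identity $(1-A_0)B_0+(1-B_0)A_0=A_0+B_0-2A_0B_0$ is used to match the stated numerators; everything else is routine substitution into the already-proved Theorem $(\ref{th:52})$.
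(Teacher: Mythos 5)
Your proposal is correct and follows exactly the route the paper intends: Corollary $(\ref{cr:55})$ is a direct specialization of Theorem $(\ref{th:52})$, and your computation of $\int_0^{2\rho(x)}\kappa_0(u)\,du=\frac{c_0}{(1-2\gamma\rho(x))^2}-c_0$, the resulting radius $(\ref{eqn:552})$ from the threshold $(\ref{eqn:522})$, and the substitution into $(\ref{eqn:524})$ to get $(\ref{eqn:554})$ all check out. The paper gives no separate proof for this corollary, so there is nothing further to compare.
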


\textbf{Remark 5.4} $(a)$ If $\kappa_0=\kappa$, then our results specialize to earlier ones [\cite{Chen}, \cite{Li}, {\cite{Argyros}}, {\cite{Wang}}, {\cite{Wang2}}]. But if $\kappa_0<\kappa$, then the benefits stated in the abstract  are obtained (see also Example $6.1$ and Example $6.2$).\\
$(b)$ A further extension can be achieved as follows. Suppose $(\ref{eqn:22})$ holds and equation $2\kappa_0(u)u-1=0$ has a minimal positive zero $\overline{\delta}$. Define\\
\~{B}$= B(x^*, \delta) \cap B(x^*, \overline{\delta})$. Moreover, suppose
\begin{eqnarray}\label{eqn:556}
||G(x)-G(y^\tau)||\leq \int_{\tau(\rho(x)+\rho(y))}^{\rho(x)+\rho(y)}\overline{\kappa}(u) du, 
\end{eqnarray}
where $\forall \ x,\ y \in$\~{B}, $0\leq\tau \leq1,\ and\ \overline{\kappa}$ is as $\kappa$. Then, we have
\begin{eqnarray*}\label{eqn:557}
\overline{\kappa}(u)&\leq& \kappa(u)\  for\ all\  u\in[0, min\{\delta, \overline{\delta}\}]. 
\end{eqnarray*}
Then, in view of the proofs $\overline{\kappa}$ can replace $\kappa$ in all results with $\kappa$. But, then if 
 \begin{eqnarray*}\label{eqn:558}
\overline{\kappa}(u)< \kappa(u),
\end{eqnarray*}
the benefits stated in the introduction are extended even further. In the case of the motivational example, we have
\begin{eqnarray*}\label{eqn:559}
\kappa_0<\overline{\kappa}=\frac{e^{\frac{1}{(e-1)}}}{2}<\kappa.
\end{eqnarray*}




\section{\bf Numerical Examples}

\begin{Example} 
Returning to the motivational example presented in the study's introduction, using $(\ref{eqn:32})$ and $G'(x^*)=(1,1,1)^T$, we have:\\
Old case $\kappa_0(u)=\kappa(u)=\frac{e}{2}$ gives
\begin{eqnarray*}
\delta_0=0.245253.
\end{eqnarray*}
Case $\kappa_0(u)= \frac{e-1}{2}\ and\ \kappa(u)=\frac{e}{2}$ gives
\begin{eqnarray*}
\delta_1=0.324947.
\end{eqnarray*}
Case $\kappa_0(u)= \frac{e-1}{2} \ and\ \overline{\kappa}(u)=\frac{e^{\frac{1}{(e-1)}}}{2}$ gives
\begin{eqnarray*}
\delta_2=0.382692.
\end{eqnarray*}
Notice that
\begin{eqnarray*}
\delta_0<\delta_1<\delta_2.
\end{eqnarray*}
\end{Example}
\begin{Example} 
Choose $X=Y=C[0,1], \Omega=\overline{V}(0,1)$ and $x^*=0$. Then, define $t$ on $\Omega$ as
 \begin{eqnarray*}
t(h)(x)=h(x)-\int_{0}^{1}x\tau h(\tau)^3 d\tau.
\end{eqnarray*}
So,
\begin{equation*}
t'(h(p))(x)= p(x)-3\int_{0}^{1}x\tau h(\tau)^2p(\tau) d\tau\ for\ all\ p\in \Omega.
\end{equation*}
Then, we get
\begin{eqnarray*}
\kappa_0(u)= 1.5u < \kappa(u)= \overline{\kappa}(u)= 3u.
\end{eqnarray*}
As a result, we get the same advantages as in Example $6.1$ by solving $(3.1)$.
\end{Example}
\begin{Example} 

Let $X=Y=R$, the reals. Define
\begin{eqnarray*}\label{eqn:ex1}
G(x)= \int_{0}^{x}\left(1+2x\sin\frac{\pi}{x}\right)dx,\ \forall x \in R.  
\end{eqnarray*}
Then
\[
  G'(x) =
  \begin{cases}
                                   1+2x\sin\frac{\pi}{x}, & x\neq0, \\
                                   1, & x=0, 
  \end{cases}
\]
Obviously, $x^*=0$ is a zero of $G$ and $G'$ satisfies that 
\begin{eqnarray*}\label{eqn:ex2}
||[G'(x^*)]^{-1}(G'(x)-G'(x^*))||= \left|2x\sin\frac{\pi}{x}\right|\leq2|x-x^*|, \forall \ x \in R.
\end{eqnarray*}
It follows from Theorem $(\ref{th:52})$ that for any $x_0\in B(x^*,1/6)$
\begin{eqnarray*}\label{eqn:ex3}
||x_t-x^*||\leq F^{5^t-1}||x_0-x^*||,\ t=1,2,\cdots, F=\left(\frac{16|x_0|^4[2|x_0|+2|y_0|]}{[1-2|x_0|]^3|y_0||z_0|}\right).
\end{eqnarray*}
However, there is no positive integrable function $L$ such that the\\ inequality $(\ref{eqn:23})$ is satisfied. In fact, notice that 
 \begin{equation*}\label{eqn:ex4}
||[G'(x^*)]^{-1}(G'(x)-G'(y^\tau))||=\left|2x\sin\frac{\pi}{x}-2y\tau\sin\frac{\pi}{y\tau}\right|=\frac{4}{2k+1},
\end{equation*}
for $x=1/k, y=1/k, \tau=\frac{2k}{2k+1}$ and $k=1,2,\cdots$ Thus, if there was a positive integrable function $L$ such that the inequality $(\ref{eqn:23})$ holds on $V(x^*, r)$ for some $r>0$, it follows that there exists some $n_0>1$ such that
\begin{eqnarray*}\label{eqn:ex5}
\int_{0}^{2r\delta}\kappa(u)du\geq \sum_{k=n_0}^{+\infty}\int_{\frac{4}{2k+1}}^{\frac{2}{k}}L(u)du\geq \sum_{k=n_0}^{+\infty}\frac{4}{2k+1}=+\infty,
\end{eqnarray*}
which is a contradiction. This example indicates that Theorem $(\ref{th:52})$ is a significant improvement over Theorem $(\ref{th:51})$ if the radius of the convergence ball is neglected.
\end{Example}

\begin{center}
{\bf Conclusion}
\end{center} 
A novel technique is developed in order to provide a finer local convergence analysis without making additional assumptions than in earlier studies. The method is quite generic. It turns out that, while the criteria are more generic, they are also more flexible, which results in some benefits with no more computational cost. Hence, we have extended the applicability of modified Newton’s method in cases not covered before. Our approach paves the way for future research to improve local results for Newton-type methods and other iterative procedures.


\end{document}